\documentclass[11pt,a4paper]{article}

\usepackage[utf8]{inputenc}
\usepackage[T1]{fontenc}
\usepackage[ngerman,english]{babel}

\usepackage[tbtags]{amsmath}
\usepackage{amssymb}
\usepackage{amsfonts}
\usepackage{amsthm}
\usepackage{bbm}

\usepackage{graphicx}
\usepackage{subcaption}
\usepackage{caption}
\usepackage{epstopdf}

\usepackage[table]{xcolor}
\usepackage{booktabs}
\usepackage{colortbl}
\usepackage{makecell}
\usepackage{array}

\usepackage{tikz}
\usetikzlibrary{shapes,positioning,backgrounds}
\usepackage{pgfplots}
\pgfplotsset{compat=1.18}

\usepackage{a4wide}
\usepackage{algcompatible}
\usepackage{adjustbox}
\usepackage{titlesec}
\usepackage{csquotes}
\usepackage{extarrows}
\usepackage{listings}
\usepackage{ifthen}

\definecolor{headerblue}{RGB}{220,230,242}
\definecolor{rowgray}{RGB}{245,245,245}

\allowdisplaybreaks
\RequirePackage[section,Algorithm]{algorithm}
\RequirePackage{emptypage}

\usepackage[backend=biber,sorting=nyt, giveninits=true]{biblatex}
\renewbibmacro{in:}{}
\DeclareFieldFormat[article]{title}{\textit{#1}} 
\DeclareFieldFormat[article]{journaltitle}{#1}
\theoremstyle{plain}						% Default
\newtheorem{theorem}{Theorem}[section]
\newtheorem{lemma}[theorem]{Lemma}
\newtheorem{proposition}[theorem]{Proposition}
\newtheorem{corollary}[theorem]{Corollary}

\newtheorem{remark}[theorem]{Remark}
\numberwithin{equation}{section}

\titleformat{\section}[block]{\normalfont\bfseries}{\thesection.}{0.5em}{}
\titlespacing{\section}{0pc}{1pc}{1pc}

\titleformat{\subsection}[block]{\normalfont\bfseries}{\thesubsection.}{0.5em}{}
\titlespacing{\subsection}{0pc}{1pc}{1pc}

\begin{document}

	\title{Optimal Inflow Control for Transport Equations with Uncertain Velocities and Demand}
	\author{Simone G\"ottlich\footnotemark[1], \; Thomas Schillinger\footnotemark[1]
    }
	
	\footnotetext[1]{University of Mannheim, Department of Mathematics, 68131 Mannheim, Germany (goettlich@uni-mannheim.de, schillinger@uni-mannheim.de)}

	\date{ \today }
	
	\maketitle
	
	\begin{abstract}
We study optimal inflow control for a linear transport equation subject to uncertainty in both time-dependent downstream demand and transport velocity. The random velocity induces a random travel time and thereby changes the relation between an inflow decision and the demand observed at its arrival time. For a fixed downstream observation window, we derive explicit optimal controls in the continuous-time setting and for piecewise constant controls. We decompose the irreducible stochastic error into contributions from demand and velocity uncertainty and show that the latter admits a bound that is linear in the velocity variance. As a computationally attractive alternative, we analyze a deterministic mean-velocity proxy whose additional performance loss admits a higher-order bound. We also establish Lipschitz stability of the optimal control with respect to perturbations of the velocity law in the Wasserstein distance. Numerical experiments illustrate the analytical results and investigate the proxy strategy for an uncertain nonlocal transport model beyond the linear theory.
\end{abstract}
	
	\noindent\textbf{2020 Mathematics Subject Classification.}
49J20; 35L04; 35R60 
%Optimal control problems involving partial differential equations
%First-order hyperbolic equations
%Partial differential equations with randomness
	
\noindent	{\bf Keywords.} Optimal control; transport equation; uncertain transport velocity; stochastic demand; random transport delay
	
	%%%%%%%%%%%%%%%%%%%%%%%%%%%%%%%%%%%%%%%%%%%%

	\section{Introduction}

Transport equations provide a basic modeling framework for systems in which goods, vehicles, energy, or information are propagated through space. Applications range from production and conveyor systems \cite{Armbruster2006} and traffic flow \cite{GARAVELLO.2016,GARAVELLO.2006} to gas transport \cite{Banda.2006}, electricity transmission \cite{Goettlich2016}, and models arising in the life sciences \cite{Formaggia2003}. In many of these applications, control decisions have to be made in the presence of uncertainty. In particular, two sources of uncertainty are of direct relevance for transport processes: the downstream demand may fluctuate in time, while the transport dynamics themselves may be subject to random variations. The latter affect the travel time through the system and therefore directly influence when an inflow decision becomes visible at the downstream boundary.

Optimal control of hyperbolic partial differential equations has been studied extensively; we refer to \cite{Herty2023} for an overview. Existence, controllability, and control strategies are available for broad classes of conservation laws and transport systems \cite{Ancona1998,Bressan2002}, with applications including traffic flow management \cite{Bayen2022} and gas transport networks \cite{Hintermueller2024,Gugat2011}. At the same time, uncertainty has increasingly been incorporated into hyperbolic models. It may enter through uncertain initial data, for instance as a consequence of incomplete measurements \cite{Iacomini.2022}, through the dynamics, such as fluctuating velocities, machine failures, or traffic incidents \cite{Barth2017,Brencher.2020,DEGOND.2007,Goettlich.2011,Gottlich.2021}, or through boundary data, in particular uncertain consumer demand \cite{Gugat2018,Zixuan2025}.

Optimal control and optimization problems governed by PDEs with uncertain coefficients form a broader related research area. Existing contributions address, among other topics, pathwise optimal controls and multilevel Monte Carlo approximation for PDEs with random coefficients \cite{AliUllmannHinze2017}, sparse optimal control under uncertain parameters \cite{LiStadler2019}, and performance bounds for approximation-based PDE-constrained optimization under uncertainty \cite{ChenRoyset2023}. A different but related issue arises when the probability law of the uncertainty itself is not known exactly. This has motivated data-driven and distributionally robust optimization based on Wasserstein ambiguity sets \cite{GaoKleywegt2023,EsfahaniKuhn2018} and, in stochastic control, Wasserstein-robust dynamic and linear-quadratic control formulations \cite{KimYang2023,Yang2021}.
These approaches typically modify the optimization problem by hedging against a family of probability distributions. Our perspective is different: we keep the prescribed stochastic control problem unchanged and quantify directly the sensitivity of its optimizer with respect to perturbations of the velocity law. This leads to Lipschitz stability of the optimal inflow in the Wasserstein--1 distance and to a quadratic performance bound under distributional misspecification.

For control problems, these different sources of uncertainty have largely been considered separately. In particular, optimal inflow control under uncertain demand has been investigated for deterministic transport dynamics \cite{Goettlich2022,Gugat2018,Zixuan2025}, whereas stochastic transport models with uncertain velocities have mainly been studied from the perspective of well-posedness and uncertainty propagation \cite{ABGRALL.2017,Brencher.2020,Dorini2020,Dorini2011}. The simultaneous presence of uncertain demand and uncertain transport dynamics leads to an additional difficulty: a random transport velocity induces a random travel time. Consequently, an inflow prescribed at a fixed time contributes to the outflow at a random future time. The optimal control therefore cannot, in general, be obtained by simply shifting the expected demand by a deterministic transport delay.

In this work, we investigate this interaction for a linear transport equation with a positive random, time-independent velocity and stochastic time-dependent demand. The linear transport equation is sufficiently simple to permit an explicit analysis, while retaining the essential mechanism generated by uncertain travel times. Moreover, it constitutes a fundamental building block for more complex hyperbolic models and appears, for example, in production and supply systems, simplified electricity transmission models \cite{Goettlich2016}, and recent models of tram networks \cite{Schillinger2026}. Our analysis builds on the stochastic solution framework of \cite{ABGRALL.2017,Brencher.2020} and extends the deterministic-velocity control setting of \cite{Goettlich2022}.

A central modeling choice is to evaluate the mismatch between outflow and demand on a fixed downstream observation window. This is natural when demand is prescribed with respect to physical time, but it has an important consequence for random travel times: near the beginning and end of the control horizon, only a subset of the possible velocity realizations contributes to the objective. The optimal inflow is therefore determined by a conditional average over admissible arrival times. Away from these temporal boundaries, all velocity realizations contribute, and the optimal control is obtained by averaging the expected future demand over the full velocity distribution. This distinction between temporal boundary effects and the genuine random-travel-time mechanism is central to the analysis below.

A second question is whether the additional uncertainty in the transport velocity has to be incorporated explicitly into the control design. From a computational perspective, a natural alternative is to replace the random velocity by its mean and solve the resulting deterministic proxy problem. The explicit structure of the linear model allows us not only to compare these strategies, but also to distinguish the error caused by the uncertain travel time itself from the additional approximation error introduced by the proxy.

The main contributions of this work are the derivation of explicit optimal inflow controls under simultaneous uncertainty in demand and transport velocity, both in continuous time and for piecewise constant controls. We establish convergence of the discrete controls and quantify the associated temporal discretization error. Moreover, we separate the effects of demand and random travel-time uncertainty and analyze the additional error introduced by a deterministic mean-velocity proxy. Finally, we prove stability of the optimal control with respect to perturbations of the velocity distribution in the Wasserstein distance.

The remainder of the paper is organized as follows. Section~\ref{sec:standardFramework} recalls the deterministic-velocity reference problem with uncertain demand and fixes the notation. Section~\ref{sec:speedRV} introduces the random transport velocity, derives the continuous and piecewise constant optimal controls, and discusses the deterministic mean-velocity proxy. Section~\ref{sec:errorAnalysis} separates the different error mechanisms and establishes the corresponding asymptotic estimates. Section~\ref{sec:summaryControls} summarizes the analytical results. Section~\ref{sec:numericalAnalysis} presents the numerical experiments, including the extension to the nonlocal transport model. Finally, Section~\ref{sec:conclusion} summarizes the main findings and outlines directions for further research.

\section{Deterministic transport with uncertain demand}
\label{sec:standardFramework}

Before introducing uncertainty in the transport dynamics, we briefly recall the deterministic-velocity setting with stochastic demand discussed in \cite{Goettlich2022}. This provides the reference problem for the stochastic model developed in Section~\ref{sec:speedRV} and fixes the notation used throughout the paper.
We consider a supply line on the spatial domain $[0,1]$. Material enters at $x=0$ through a time-dependent inflow $u(t)$ and is transported with a constant velocity
\[
0<\lambda_{\min}\leq \lambda\leq\lambda_{\max}.
\]
Denoting the material density by $\rho(x,t)$, the dynamics are governed by
\begin{align}
    \rho_t(x,t)+\lambda\rho_x(x,t)&=0,
    & \rho(x,0)&=\rho_0(x),
    & \lambda\rho(0,t)&=u(t),
    \qquad x\in(0,1),\ t\in[0,T].
    \label{eq:PDE}
\end{align}
By the method of characteristics \cite{LEVEQUE.2002},
\begin{align*}
\rho(x,t)=
\begin{cases}
\rho_0(x-\lambda t), & x-\lambda t\geq 0,\\[1mm]
\dfrac{1}{\lambda}u\!\left(t-\dfrac{x}{\lambda}\right), & x-\lambda t<0.
\end{cases}
\end{align*}
Consequently, the outflow at $x=1$ is
\begin{align}
f^{\mathrm{out}}(t)=\lambda\rho(1,t)=
\begin{cases}
\lambda\rho_0(1-\lambda t), & \lambda t\leq 1,\\[1mm]
u\!\left(t-\dfrac{1}{\lambda}\right), & \lambda t>1.
\end{cases}
\label{eq:fout}
\end{align}
Thus, once the influence of the initial condition has left the domain, the inflow reaches the downstream boundary after the deterministic transport delay $1/\lambda$. This simple time-shift relation is the key structural property that will change when $\lambda$ becomes random.

\paragraph{Stochastic demand.}
At the downstream boundary, the outflow is compared with a time-dependent, uncertain consumer demand $(D(t))_{t\in[0,T]}$, modeled as a stochastic process on a probability space $(\Omega,\mathcal{F},\mathbb{P})$. We assume that $D(0)=d_0\geq0$ is known, that $D(t)\geq0$ almost surely, and that
\[
D\in L^2(\Omega\times(0,T)).
\]
In particular, the first and second moments of the demand are well defined. The analysis below only relies on these moments and is therefore independent of a particular stochastic model.
Typical examples satisfying these assumptions include the Cox--Ingersoll--Ross process, the Jacobi process, and the exponential of an Ornstein--Uhlenbeck process; see \cite{Iacus2008}. Throughout the analytical part of this paper, however, we do not impose a specific stochastic model for the demand.

\paragraph{Optimal inflow.}
For deterministic demand, \eqref{eq:fout} would allow exact demand tracking by shifting the demand by the transport time $1/\lambda$. Under stochastic demand, exact pathwise tracking is in general impossible. We therefore minimize the expected squared mismatch,
\begin{align*}
\min_{u\in L^2(0,T-1/\lambda)}\quad
H(u)
:=
\int_{\frac{1}{\lambda}}^{T}
\mathbb{E}\!\left[
\left(f^{\mathrm{out}}(s)-D(s)\right)^2
\right]\,\mathrm{d}s,
\qquad
\text{subject to \eqref{eq:PDE}--\eqref{eq:fout}.}
%\label{eq:ObjFct}
\end{align*}
Since $f^{\mathrm{out}}(s)$ is deterministic in the present setting,
\[
\mathbb{E}\!\left[
\left(f^{\mathrm{out}}(s)-D(s)\right)^2
\right]
=
\mathbb{E}[D(s)^2]
-2\,\mathbb{E}[D(s)]f^{\mathrm{out}}(s)
+\left(f^{\mathrm{out}}(s)\right)^2.
\]
Pointwise minimization with respect to the outflow therefore gives
$f^{\mathrm{out}}(s)=\mathbb{E}[D(s)]$. Using \eqref{eq:fout}, the optimal continuous-time inflow is
\begin{align}
u_\text{det}^*(t)
=
\mathbb{E}\!\left[
D\!\left(t+\frac{1}{\lambda}\right)
\right],
\qquad
t\in\left[0,T-\frac{1}{\lambda}\right].
\label{eq:optInflowEasy}
\end{align}
Hence, for deterministic transport dynamics, uncertainty in the demand affects the optimal control only through its mean; the transport mechanism itself contributes the fixed time shift $1/\lambda$. We refer to \cite{Goettlich2022} for a more detailed analysis, including time-dependent velocities and tree-shaped networks.

In applications, the inflow may only be updated at prescribed time instants. Let
\begin{align*}
\Pi_{n,\text{det}}
=
\left\{
0=t_0^{(n)}<t_1^{(n)}<\dots<t_n^{(n)}
<t_{n+1}^{(n)}
=
T-\frac{1}{\lambda}
\right\}
\end{align*}
be a partition and let $U_{n,\text{det}}\subset L^2(0,T-1/\lambda)$ denote the corresponding space of piecewise constant controls. Suppressing the superscript on the grid points, we write
\begin{align*}
u_\text{det}^{(n)}(t)=u_{i,\text{det}}^{(n)}
\qquad\text{for }t\in[t_i,t_{i+1}),
\quad i=0,\ldots,n,
%\label{eq:pwconstantU}
\end{align*}
with the natural convention at the right endpoint. Restricting $H$ to $U_n$ and differentiating with respect to each $u_i^{(n)}$ yields
\begin{align*}
u_{i,\text{det}}^{(n),*}
=
\frac{1}{t_{i+1}-t_i}
\int_{t_i}^{t_{i+1}}
\mathbb{E}\!\left[
D\!\left(s+\frac{1}{\lambda}\right)
\right]\,\mathrm{d}s,
\qquad i=0,\ldots,n.
%\label{eq:optimalU_iDeterministic}
\end{align*}
Thus, the optimal piecewise constant control is the cell average of the optimal continuous-time control \eqref{eq:optInflowEasy}. This deterministic reference case will be useful below: once the transport velocity becomes random, the fixed delay $1/\lambda$ is replaced by a distribution of possible arrival times, and the structure of the optimal piecewise constant control changes accordingly.

\section{Random transport velocity and optimal inflow control}
\label{sec:speedRV}

We now introduce uncertainty in the transport dynamics. In contrast to Section~\ref{sec:standardFramework}, the transport delay is no longer fixed but depends on the realization of a random velocity, as depicted in Figure \ref{fig:uncertainDelay}. We retain a fixed downstream observation window, which is natural when the demand is prescribed with respect to physical time. This leads to temporal boundary effects in the optimal control that have to be treated explicitly.
\begin{figure}
\centering
    \begin{tikzpicture}[scale=1, every node/.style={font=\small}]
			% --- deterministic row ---
			\draw[->, thick] (0,2.1) -- (9,2.1);
			\filldraw (0,2.1) circle (2pt);
			\node[above left] at (0,2.15) {$t$};
			\draw[->, ultra thick, blue!70!black] (0,2.1) to[bend left=18] (4.5,2.1);
			\filldraw (4.5,2.1) circle (2pt);
			\node[above] at (4.5,2.4) {$t+1/\lambda$};
			\node[left] at (-0.4,2.1) {\textbf{deterministic} $\lambda$};
			
			% --- random row ---
			\draw[->, thick] (0,0) -- (9,0);
			\filldraw (0,0) circle (2pt);
			\node[below left] at (0,-0.05) {$t$};
            \node[below] at (3.2,-0.05) {$t+1/\lambda_{\max}$};
            \draw (3.2,0.1) -- (3.2,-0.1);
            \filldraw (4.5,2.1) circle (2pt);
            \node[below] at (6.7,-0.05) {$t+1/\lambda_{\min}$};
            \draw (6.7,0.1) -- (6.7,-0.1);
			\foreach \x/\op in {3.2/0.15, 3.7/0.35, 4.2/0.6, 4.7/0.85, 5.2/1, 5.7/0.7, 6.2/0.4, 6.7/0.2} {
				\draw[->, thick, red!70!black, opacity=\op] (0,0) to[bend left=10] (\x,0);
			}
			% little density bump above the arrival cloud
		%	\draw[smooth, gray!60!black, thick] plot coordinates {(3.6,0.55) (4.4,1.1) (5.2,1.35) (6.0,1.05) (6.8,0.5)};
			\node[left] at (-0.4,0) {\textbf{random} $\lambda$};
			%\node[below] at (5.0,-0.5) {spread of arrival times};
		\end{tikzpicture}
         \caption{The influence of the uncertain transport velocity on the transport delay.}
    \label{fig:uncertainDelay}
\end{figure}

\subsection{Stochastic transport model and random travel times}
% Let $(\Omega,\mathcal A,\mathbb P)$ be a probability space and let
% \[
% \lambda:\Omega\to[\lambda_{\min},\lambda_{\max}],
% \qquad 0<\lambda_{\min}<\lambda_{\max},
% \]
% be a positive random variable. 
Let
\[
\lambda:\Omega\to[\lambda_{\min},\lambda_{\max}],
\qquad 0<\lambda_{\min}<\lambda_{\max},
\]
be a positive random variable on the probability space $(\Omega,\mathcal F,\mathbb P)$ introduced above.
For each realization $\omega\in\Omega$, we consider
\begin{align}
\rho_t(x,t;\omega)+\lambda(\omega)\rho_x(x,t;\omega)&=0,
&
\rho(x,0;\omega)&=\rho_0(x),
&
\lambda(\omega)\rho(0,t;\omega)&=u(t).
\label{eq:stochasticPDE}
\end{align}
By characteristics, the downstream outflow is
\begin{align*}
f^{\mathrm{out}}(t,\omega)
=
\begin{cases}
\lambda(\omega)\rho_0(1-\lambda(\omega)t),
& \lambda(\omega)t\leq1,\\[1mm]
u\!\left(t-\dfrac{1}{\lambda(\omega)}\right),
& \lambda(\omega)t>1.
\end{cases}
\end{align*}

Throughout the analytical part, we assume:
\begin{enumerate}
\item[(A1)] $\lambda_{\min}\leq\lambda\leq\lambda_{\max}$ almost surely, with $\lambda_{\min}>0$.
\item[(A2)] $F_\lambda\in C^1([\lambda_{\min},\lambda_{\max}])$ is the cumulative distribution function of $\lambda$ with density $\phi_\lambda$.
\item[(A3)] $D:[0,T]\times\Omega\to[0,\infty)$ is jointly measurable and $D\in L^2(\Omega\times(0,T))$.
\item[(A4)] $m(t):=\mathbb E[D(t)]$ denotes the mean demand.
\item[(A5)] The random variable $\lambda$ is independent of the demand process $D$, i.e.,
$\sigma(\lambda)$ and $\sigma(D(t):t\in[0,T])$ are independent.
\end{enumerate}
The random travel time through the unit interval is
\[
\tau:=\frac{1}{\lambda}
\in\left[\frac{1}{\lambda_{\max}},\frac{1}{\lambda_{\min}}\right].
\]
We compare outflow and demand on the fixed downstream observation window
\[
I_{\mathrm{obs}}
:=
\left[\frac{1}{\lambda_{\min}},T\right].
\]
Since an outflow observed at time $s\in I_{\mathrm{obs}}$ may originate from an inflow as late as $s-1/\lambda_{\max}$, the admissible control has to be defined on
\[
I_{\mathrm{ctrl}}
:=
\left[0,T-\frac{1}{\lambda_{\max}}\right].
\]
The stochastic optimal control problem is therefore
\begin{align*}
\min_{u\in L^2(I_{\mathrm{ctrl}})}
H(u)
:=
\int_{1/\lambda_{\min}}^T
\mathbb E\!\left[
\left(
u\!\left(s-\frac{1}{\lambda}\right)-D(s)
\right)^2
\right]\,\mathrm ds .
\end{align*}
Here the contribution of the initial condition has disappeared because $s\geq1/\lambda_{\min}$ implies $\lambda s\geq1$ almost surely.

For a fixed control time $t\in I_{\mathrm{ctrl}}$, only those realizations of $\lambda$ for which the corresponding arrival time belongs to $I_{\mathrm{obs}}$ contribute to the objective. We therefore introduce
\begin{align}
\Lambda(t)
:=
\left\{
\ell\in[\lambda_{\min},\lambda_{\max}]
:
\frac{1}{\lambda_{\min}}
\leq t+\frac{1}{\ell}\leq T
\right\},
\qquad
q(t):=\int_{\Lambda(t)}\phi_\lambda(\ell)\,\mathrm d\ell.
\label{eq:admissibleVelocitySet}
\end{align}
If $q(t)=0$, the value of the control at $t$ does not affect the objective; all formulas below are therefore understood on the set where $q(t)>0$.

\subsection{Continuous-time optimal control}
\label{sec:timeContinuous}

Changing variables pathwise according to $t=s-1/\lambda$ and using (A5), the terms in $H$ that depend on $u$ can be written as
\[
\int_{I_{\mathrm{ctrl}}}
\int_{\Lambda(t)}
\left[
u(t)^2
-
2u(t)m\!\left(t+\frac{1}{\ell}\right)
\right]
\phi_\lambda(\ell)\,\mathrm d\ell\,\mathrm dt.
\]
This representation makes the pointwise minimization with respect to $u(t)$ explicit.

\begin{theorem}[Optimal continuous-time inflow]
\label{thm:continuousOptimalControl}
Let \emph{(A1)--(A5)} hold. Then, for almost every $t\in I_{\mathrm{ctrl}}$ with $q(t)>0$, the unique optimal control is
\begin{align}
u^*(t)
=
\frac{
\displaystyle
\int_{\Lambda(t)}
m\!\left(t+\frac{1}{\ell}\right)\phi_\lambda(\ell)\,\mathrm d\ell
}{
q(t)
%\displaystyle \int_{\Lambda(t)}\phi_\lambda(\ell)\,\mathrm d\ell
}.
\label{eq:continuousOptControlGeneral}
\end{align}

Equivalently,
\[
u^*(t)
=
\mathbb E\!\left[
D\!\left(t+\frac{1}{\lambda}\right)
\,\middle|\,
t+\frac{1}{\lambda}\in I_{\mathrm{obs}}
\right].
\]
\end{theorem}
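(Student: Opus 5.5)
The plan is to rewrite $H(u)$ as an integral over control times of a quadratic function of $u(t)$, and then minimize that quadratic pointwise. First expand the square inside the expectation:
\[
\mathbb E\bigl[(u(s-\tfrac1\lambda)-D(s))^2\bigr]=\mathbb E\bigl[u(s-\tfrac1\lambda)^2\bigr]-2\,\mathbb E\bigl[u(s-\tfrac1\lambda)D(s)\bigr]+\mathbb E[D(s)^2].
\]
The last term does not depend on $u$, and it is integrable by (A3). For the cross term, (A5) lets me condition on $\lambda=\ell$. Since $u$ is deterministic, Fubini (justified by joint measurability in (A3) and $u\in L^2$) gives
\[
\mathbb E\bigl[u(s-1/\lambda)D(s)\bigr]=\int_{\lambda_{\min}}^{\lambda_{\max}}u(s-1/\ell)\,m(s)\,\phi_\lambda(\ell)\,\mathrm d\ell .
\]
The first term is treated the same way, with the density $\phi_\lambda$ supplied by (A2).

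Next, swap the $s$- and $\ell$-integrals. For fixed $\ell$, substitute $t=s-1/\ell$, which has unit Jacobian. The condition $s\in I_{\mathrm{obs}}$ becomes $1/\lambda_{\min}\le t+1/\ell\le T$. Because $\ell\ge\lambda_{\min}$, this forces $t\ge0$, and because $\ell\le\lambda_{\max}$, it forces $t\le T-1/\lambda_{\max}$, so $t\in I_{\mathrm{ctrl}}$. Swapping back, the integration region in $(t,\ell)$ is exactly $\{t\in I_{\mathrm{ctrl}},\ \ell\in\Lambda(t)\}$. This yields the representation displayed before the theorem:
\[
H(u)=C+\int_{I_{\mathrm{ctrl}}}\Bigl[q(t)\,u(t)^2-2u(t)\,M(t)\Bigr]\mathrm dt,\qquad M(t):=\int_{\Lambda(t)}m(t+\tfrac1\ell)\phi_\lambda(\ell)\,\mathrm d\ell .
\]

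Pointwise, the integrand is a strictly convex quadratic in $u(t)$ wherever $q(t)>0$. Completing the square gives
\[
q(t)u^2-2uM=q(t)\bigl(u-M/q\bigr)^2-M^2/q .
\]
Hence $H(u)\ge C-\int M^2/q$, with equality iff $u=M/q$ a.e.\ on $\{q>0\}$, which is formula \eqref{eq:continuousOptControlGeneral}. Uniqueness a.e.\ on $\{q>0\}$ follows from strictness of the square term, since any other $u$ strictly increases $H$ on a set of positive measure.

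The hardest part is checking that the candidate $u^*=M/q$ is admissible, i.e.\ lies in $L^2(I_{\mathrm{ctrl}})$, and that the swaps are legitimate. By Jensen (or Cauchy--Schwarz) with respect to the probability measure $\phi_\lambda\,\mathrm d\ell/q(t)$ on $\Lambda(t)$,
\[
(M/q)^2\le \frac1{q}\int_{\Lambda(t)}m(t+\tfrac1\ell)^2\phi_\lambda(\ell)\,\mathrm d\ell .
\]
Multiplying by $q(t)$ and integrating over $t$ shows that $\int q\,(u^*)^2\le\int_{I_{\mathrm{obs}}}m^2<\infty$, so the objective is finite at $u^*$. Plain $L^2$ membership of $u^*$ itself is subtler, because $q$ may be small near the right end of $I_{\mathrm{ctrl}}$. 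Here I would use that $u^*$ is a weighted average of $m$ over the window $[t+1/\lambda_{\max},t+1/\lambda_{\min}]\cap I_{\mathrm{obs}}$, and bound $|u^*(t)|\le\operatorname{ess\,sup}$ of $m$ over that window. Failing that, I would simply note that only the values on $\{q>0\}$ matter and that the minimization is understood in the weighted sense $\int q\,u^2<\infty$.

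Finally, for the conditional-expectation form, write
\[
\mathbb E\bigl[D(t+1/\lambda)\mathbf 1_{\{\lambda\in\Lambda(t)\}}\bigr]=\int_{\Lambda(t)}m(t+1/\ell)\phi_\lambda(\ell)\,\mathrm d\ell=M(t),
\]
again by independence (A5) and Fubini, and note that $\mathbb P(\lambda\in\Lambda(t))=q(t)$. Since $\{\lambda\in\Lambda(t)\}=\{t+1/\lambda\in I_{\mathrm{obs}}\}$, the ratio $M(t)/q(t)$ is exactly the stated conditional expectation.
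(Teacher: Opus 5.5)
Your proposal is correct and takes essentially the same route as the paper: a pathwise change of variables $t=s-1/\lambda$ combined with independence (A5) yields $\int_{I_{\mathrm{ctrl}}}[q(t)u(t)^2-2u(t)M(t)]\,\mathrm dt$, this strictly convex quadratic is minimized pointwise, and (A5) gives the conditional-expectation form. You add details the paper's terse proof omits: the Fubini/Tonelli justification, the check that the substituted region is exactly $\{t\in I_{\mathrm{ctrl}},\ \ell\in\Lambda(t)\}$, and a caution that $u^*$ need not lie in $L^2(I_{\mathrm{ctrl}})$ when $q\to0$ at the ends of $I_{\mathrm{ctrl}}$, which is well placed since the paper does not address it and reading the minimization in the weighted sense $\int q\,u^2<\infty$ is a reasonable fix.
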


\begin{proof}
For fixed $t$, the $u$-dependent part of the integrand is the strictly convex quadratic function
\[
q(t)u(t)^2
-
2u(t)
\int_{\Lambda(t)}
m\!\left(t+\frac{1}{\ell}\right)\phi_\lambda(\ell)\,\mathrm d\ell.
\]
Its unique minimizer is \eqref{eq:continuousOptControlGeneral}. The conditional-expectation representation follows from (A5).
\end{proof}

The formula separates the genuine random-delay effect from the temporal boundary effect. In the interior control interval
\begin{align}
I_{\mathrm{int}}
:=
\left[
\frac{1}{\lambda_{\min}}-\frac{1}{\lambda_{\max}},
\,
T-\frac{1}{\lambda_{\min}}
\right],
\label{eq:interiorControlInterval}
\end{align}
provided this interval is nonempty, every velocity realization produces an arrival time inside $I_{\mathrm{obs}}$. Hence $\Lambda(t)=[\lambda_{\min},\lambda_{\max}]$ and $q(t)=1$.

\begin{corollary}[Interior optimal control]
\label{cor:interiorOptimalControl}
For $t\in I_{\mathrm{int}}$, the optimal control reduces to
\begin{align}
u^*(t)
=
\mathbb E\!\left[
D\!\left(t+\frac{1}{\lambda}\right)
\right]
=
\int_{\lambda_{\min}}^{\lambda_{\max}}
m\!\left(t+\frac{1}{\ell}\right)\phi_\lambda(\ell)\,\mathrm d\ell.
\label{eq:continuousOptCOntrol}
\end{align}
\end{corollary}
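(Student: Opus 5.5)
The plan is to specialize Theorem~\ref{thm:continuousOptimalControl} to $t\in I_{\mathrm{int}}$. The only real work is to check that $\Lambda(t)$ equals the full velocity range there, so that $q(t)=1$ and the conditioning event becomes almost sure.

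First, fix $t\in I_{\mathrm{int}}$ and any $\ell\in[\lambda_{\min},\lambda_{\max}]$. Since $1/\lambda_{\max}\le 1/\ell\le 1/\lambda_{\min}$, the lower endpoint of \eqref{eq:interiorControlInterval} gives
\[
t+\frac{1}{\ell}\;\ge\;\frac{1}{\lambda_{\min}}-\frac{1}{\lambda_{\max}}+\frac{1}{\lambda_{\max}}=\frac{1}{\lambda_{\min}} .
\]
The upper endpoint gives
\[
t+\frac{1}{\ell}\;\le\; T-\frac{1}{\lambda_{\min}}+\frac{1}{\lambda_{\min}}=T .
\]
Hence $\Lambda(t)=[\lambda_{\min},\lambda_{\max}]$ by \eqref{eq:admissibleVelocitySet}. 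Because $\phi_\lambda$ is the density of $F_\lambda$ supported on this interval by (A1)--(A2), it follows that $q(t)=\int_{\lambda_{\min}}^{\lambda_{\max}}\phi_\lambda\,\mathrm d\ell=1$. We also note $I_{\mathrm{int}}\subset I_{\mathrm{ctrl}}$, since $T-1/\lambda_{\min}\le T-1/\lambda_{\max}$ and $1/\lambda_{\min}-1/\lambda_{\max}\ge0$. So Theorem~\ref{thm:continuousOptimalControl} applies and $q(t)>0$.

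Second, substitute $\Lambda(t)$ and $q(t)=1$ into \eqref{eq:continuousOptControlGeneral}. This directly yields the integral expression $\int_{\lambda_{\min}}^{\lambda_{\max}} m(t+1/\ell)\phi_\lambda(\ell)\,\mathrm d\ell$ in \eqref{eq:continuousOptCOntrol}.

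For the expectation form, two things remain. The event $\{t+1/\lambda\in I_{\mathrm{obs}}\}$ has probability one, so the conditional expectation in Theorem~\ref{thm:continuousOptimalControl} is the plain expectation $\mathbb E[D(t+1/\lambda)]$. It then suffices to identify this expectation with the integral. By (A5) and the joint measurability in (A3), Fubini's theorem (freezing-lemma style) gives
\[
\mathbb E\!\left[D\!\left(t+\tfrac1\lambda\right)\right]=\int_{\lambda_{\min}}^{\lambda_{\max}}\mathbb E\!\left[D\!\left(t+\tfrac1\ell\right)\right]\phi_\lambda(\ell)\,\mathrm d\ell ,
\]
and the inner expectation is $m(t+1/\ell)$ by (A4).

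The only mildly delicate point is this last Fubini step, since $m$ is defined only almost everywhere as an $L^2$ function. Integrability follows from $D\in L^2(\Omega\times(0,T))$ together with the change of variables $s=t+1/\ell$, whose Jacobian is bounded because $\ell\in[\lambda_{\min},\lambda_{\max}]$. This gives the identity for almost every $t$, which matches the almost-everywhere statement of Theorem~\ref{thm:continuousOptimalControl}.
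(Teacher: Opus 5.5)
Your proposal is correct and follows the same route as the paper. Just before the corollary, the paper observes that for $t\in I_{\mathrm{int}}$ every arrival time $t+1/\ell$ lies in $I_{\mathrm{obs}}$, so $\Lambda(t)=[\lambda_{\min},\lambda_{\max}]$, $q(t)=1$, and Theorem~\ref{thm:continuousOptimalControl} reduces to the stated formula; your endpoint computation, the check $I_{\mathrm{int}}\subset I_{\mathrm{ctrl}}$, and the Fubini/independence justification of the expectation form spell out details the paper leaves implicit.
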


Thus, away from the temporal boundaries, the deterministic time shift is replaced by an average over the complete distribution of random travel times. Near the beginning and end of the control horizon, the average is taken only over those velocities whose associated arrival times lie in the fixed observation window.

\subsection{Piecewise constant controls}
\label{sec:pwConstant}

Let 
\[
\Pi_n
=
\left\{
0=t_0^{(n)}<\cdots<t_{n+1}^{(n)}
=
T-\frac{1}{\lambda_{\max}}
\right\}
\]
be a partition of $I_{\mathrm{ctrl}}$, and let $U_n$ denote the corresponding space of piecewise constant controls. We suppress the superscript on the grid points and write
\[
u^{(n)}(t)=u_i^{(n)}
\qquad\text{for }t\in[t_i,t_{i+1}).
\]
Define
\begin{align*}
Q_i^{(n)}
:=
\int_{t_i}^{t_{i+1}}q(t)\,\mathrm dt,
%\\
\quad \text{and} \quad
M_i^{(n)}
:=
\int_{t_i}^{t_{i+1}}
\int_{\Lambda(t)}
m\!\left(t+\frac{1}{\ell}\right)
\phi_\lambda(\ell)\,\mathrm d\ell\,\mathrm dt .
\end{align*}

\begin{proposition}[Optimal piecewise constant inflow]
\label{prop:optimalDiscreteControl}
Under \emph{(A1)--(A5)}, for every cell with $Q_i^{(n)}>0$, the unique optimal coefficient is
\begin{align}
u_i^{(n),*}
=
\frac{M_i^{(n)}}{Q_i^{(n)}}
=
\frac{
\displaystyle
\int_{t_i}^{t_{i+1}}q(t)u^*(t)\,\mathrm dt
}{
\displaystyle
\int_{t_i}^{t_{i+1}}q(t)\,\mathrm dt
}.
\label{eq:optimalDiscreteUi}
\end{align}
\end{proposition}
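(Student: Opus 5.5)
The plan is to restrict $H$ to $U_n$ and use the change-of-variables representation of the $u$-dependent part of $H$ derived before Theorem~\ref{thm:continuousOptimalControl}. That representation is
\[
\int_{I_{\mathrm{ctrl}}}\int_{\Lambda(t)}\left[u(t)^2-2u(t)m\!\left(t+\frac{1}{\ell}\right)\right]\phi_\lambda(\ell)\,\mathrm d\ell\,\mathrm dt .
\]
First I will justify it carefully, since it is the only delicate point. For fixed $\ell$, the substitution $t=s-1/\ell$ maps $s\in I_{\mathrm{obs}}$ to $t\in[1/\lambda_{\min}-1/\ell,\,T-1/\ell]\subset I_{\mathrm{ctrl}}$, and this is exactly the set of $t$ with $\ell\in\Lambda(t)$. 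Expanding the square gives three terms:
\begin{itemize}
\item The term $\mathbb E[D(s)^2]$ does not depend on $u$.
\item For the cross term, (A5) and Fubini give $\mathbb E[u(s-1/\lambda)D(s)]=\int \phi_\lambda(\ell)\,u(s-1/\ell)\,m(s)\,\mathrm d\ell$.
\item The quadratic term is handled the same way.
\end{itemize}
Swapping the $s$- and $\ell$-integrals (all integrands lie in $L^1$ because $u\in L^2(I_{\mathrm{ctrl}})$, $D\in L^2$, $\lambda$ is bounded, and $\phi_\lambda$ is continuous) and then swapping back to integrate over $t$ first yields the displayed form.

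Next I insert $u=u^{(n)}=\sum_i u_i^{(n)}\mathbbm 1_{[t_i,t_{i+1})}$. The outer integral over $I_{\mathrm{ctrl}}$ splits into the cells. On cell $i$ the control is the constant $u_i^{(n)}$, so the inner $\ell$-integral of $\phi_\lambda$ gives $q(t)$, and integrating in $t$ gives $Q_i^{(n)}$. The mixed term gives exactly $M_i^{(n)}$. Hence, up to a constant,
\[
H\big(u^{(n)}\big)=\sum_{i=0}^n\left[Q_i^{(n)}\big(u_i^{(n)}\big)^2-2u_i^{(n)}M_i^{(n)}\right].
\]
This is a separable sum of one-dimensional quadratics. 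For every $i$ with $Q_i^{(n)}>0$, the $i$-th summand is strictly convex, and its unique minimizer, obtained by setting the derivative to zero, is $M_i^{(n)}/Q_i^{(n)}$. Cells with $Q_i^{(n)}=0$ leave the objective unaffected; this is consistent with the convention after \eqref{eq:admissibleVelocitySet}.

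Finally, the second expression in \eqref{eq:optimalDiscreteUi} follows from Theorem~\ref{thm:continuousOptimalControl}. For a.e.\ $t$ with $q(t)>0$, we have $q(t)u^*(t)=\int_{\Lambda(t)}m(t+1/\ell)\phi_\lambda(\ell)\,\mathrm d\ell$. On the set $\{q=0\}$ the inner integral vanishes because $\Lambda(t)$ has zero $\phi_\lambda$-mass, so $q u^*$ may be defined as $0$ there. Integrating over $[t_i,t_{i+1})$ then gives $M_i^{(n)}=\int_{t_i}^{t_{i+1}}q(t)u^*(t)\,\mathrm dt$.

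The main obstacle is only the rigorous justification of the Fubini/independence step, in particular the integrability of $m(t+1/\ell)$ jointly in $(t,\ell)$. I would handle it via the bound $\int\!\!\int |m(t+1/\ell)|\phi_\lambda\,\mathrm d\ell\,\mathrm dt\le\|m\|_{L^1(0,T)}$, using $\int\phi_\lambda=1$ and $t+1/\ell\in[0,T]$ on the relevant set. The rest is elementary.
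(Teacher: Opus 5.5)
Your proposal is correct and follows the paper's proof: you reduce the cell-wise $u_i^{(n)}$-dependent part of $H$ to the strictly convex quadratic $Q_i^{(n)}(u_i^{(n)})^2-2M_i^{(n)}u_i^{(n)}$, minimize it at $M_i^{(n)}/Q_i^{(n)}$, and obtain the second equality by multiplying the continuous-time formula by $q(t)$. You also supply details the paper leaves implicit, namely the Fubini/independence justification of the change of variables and the observation that the inner integral vanishes on $\{q=0\}$.
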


\begin{proof}
The $u_i^{(n)}$-dependent contribution of the cell $[t_i,t_{i+1})$ is
\[
Q_i^{(n)}(u_i^{(n)})^2-2M_i^{(n)}u_i^{(n)}.
\]
Strict convexity and the first-order optimality condition yield \eqref{eq:optimalDiscreteUi}. Multiplying \eqref{eq:continuousOptControlGeneral} by $q(t)$ yields the second equality.
\end{proof}

Hence, the discrete optimum is a $q$-weighted cell average of the continuous optimum. On every cell contained in $I_{\mathrm{int}}$, where $q(t)=1$, this reduces to the ordinary cell average
\begin{align}
\label{eq:pwConstantInterior}
    u_i^{(n),*}
=
\frac{1}{t_{i+1}-t_i}
\int_{t_i}^{t_{i+1}}u^*(t)\,\mathrm dt.
\end{align}
The relation between the optimal piecewise constant and continuous-time controls is established in Lemma \ref{lem:discretizationError}.

\subsection{Deterministic mean-velocity proxy}
\label{sec:proxyModel}
We finally replace the random velocity by its mean
\[
\bar\lambda:=\mathbb E[\lambda].
\]
In the stochastic setting, the calculation of the optimal control requires the probability density of the random velocity to be known and involves an additional integration to compute its expected value. We therefore consider this mean-velocity proxy as an approximation that enables a faster computation of the optimal control.
To evaluate the proxy within the same fixed observation window, define
\[
I_{\mathrm{proxy}}
:=
\left[
\frac{1}{\lambda_{\min}}-\frac{1}{\bar\lambda},
\,
T-\frac{1}{\bar\lambda}
\right].
\]
On this interval, the deterministic proxy control obtained from Section~\ref{sec:standardFramework} is
\begin{align}
\bar u(t)
=
%m\!\left(t+\frac{1}{\mathbb E[\lambda]}\right).
m\!\left(t+\frac{1}{\color{teal}\bar\lambda}\right).
\label{eq:ProxyControlContinuous}
\end{align}
For a piecewise constant implementation on a cell $[t_i,t_{i+1})\subset I_{\mathrm{proxy}}$, the corresponding proxy coefficient is
\begin{align}
\bar u_i^{(n)}
=
\frac{1}{t_{i+1}-t_i}
\int_{t_i}^{t_{i+1}}
%m\!\left(t+\frac{1}{\mathbb E[\lambda]}\right)\,\mathrm dt.
m\!\left(t+\frac{1}{\color{teal}\bar{\lambda}}\right)\,\mathrm dt.
\label{eq:ProxyControlPWConstant}
\end{align}
% On the common interior region where both formulas are unaffected by temporal boundary truncation, the stochastic optimum and the proxy take the particularly transparent forms derived in 
% \[
% u^*(t)
% =
% \mathbb E_\lambda\!\left[
% m\!\left(t+\frac{1}{\lambda}\right)
% \right],
% \qquad
% \bar u(t)
% =
% %m\!\left(t+\frac{1}{\mathbb E[\lambda]}\right).
% m\!\left(t+\frac{1}{\color{teal}\bar{\lambda}}\right).
% \]
The proxy therefore replaces averaging after the nonlinear travel-time transformation $\lambda\mapsto1/\lambda$ by evaluation at the mean velocity. Section~\ref{sec:errorAnalysis} investigates the resulting performance loss.

\section{Error analysis}
\label{sec:errorAnalysis}

The representation of the optimal control in Section~\ref{sec:speedRV} suggests a natural separation of the different error mechanisms. The fixed observation window produces additional temporal boundary effects, whereas the genuine influence of the random transport velocity is most transparent on the interior interval $I_{\mathrm{int}}$ from \eqref{eq:interiorControlInterval}. We therefore first isolate the boundary contribution and then analyze, on $I_{\mathrm{int}}$, (i) the irreducible stochastic error, (ii) the additional error caused by velocity uncertainty, (iii) the deterministic mean-velocity proxy, and (iv) temporal control discretization.

\subsection{A variance--bias decomposition}
For $t\in I_{\mathrm{ctrl}}$, define $q(t)$ as in Equation \eqref{eq:admissibleVelocitySet}.
% \[
% q(t)
% =
% \int_{\Lambda(t)}\phi_\lambda(\ell)\,\mathrm d\ell,
% \qquad
% g(t)
% =
% \frac{1}{q(t)}
% \int_{\Lambda(t)}
% m\!\left(t+\frac{1}{\ell}\right)
% \phi_\lambda(\ell)\,\mathrm d\ell
% \]
%whenever $q(t)>0$. By Theorem~\ref{thm:continuousOptimalControl}, $g=u^*$ almost everywhere.
Completing the square in the representation of $H$ yields 
\begin{align}
H(u) &= \int_{I_{\mathrm{ctrl}}}\mathbb{E}\left[\mathbbm{1}_{{\{t+1/\lambda\in I_{\mathrm{obs}}\}}} \left(u(t)  - D\left(t+\frac{1}{\lambda}\right)\right)^2 \right] \mathrm dt \nonumber\\
&= \int_{I_{\mathrm{ctrl}}} q(t)\,u(t)^2
-2u(t)\,\mathbb E\!\left[\mathbf 1_{\{t+1/\lambda\in I_{\mathrm{obs}}\}}D\!\left(t+\tfrac1\lambda\right)\right] +\mathbb E\!\left[\mathbf 1_{\{t+1/\lambda\in I_{\mathrm{obs}}\}}D\!\left(t+\tfrac1\lambda\right)^2\right]   \mathrm dt\nonumber\\ 
&= \int_{I_{\mathrm{ctrl}}} q(t)\,u(t)^2
-2u(t)q(t)u^*(t) +\mathbb E\!\left[\mathbf 1_{\{t+1/\lambda\in I_{\mathrm{obs}}\}}D\!\left(t+\tfrac1\lambda\right)^2\right]   \mathrm dt\nonumber\\ 
&= \int_{I_{\mathrm{ctrl}}} q(t)\,u(t)^2
-2u(t)q(t)u^*(t) +q(t)(u^*(t))^2 \mathrm dt\nonumber\\
& ~~~~+ \underbrace{\int_{I_{\mathrm{ctrl}}} \mathbb E\!\left[\mathbf 1_{\{t+1/\lambda\in I_{\mathrm{obs}}\}}D\!\left(t+\tfrac1\lambda\right)^2\right]-q(t)(u^*(t))^2}_{H(u^*)}   \mathrm dt\nonumber\\ 
&=H(u^*)
+
\int_{I_{\mathrm{ctrl}}}
q(t)\,|u(t)-u^*(t)|^2\,\mathrm dt ,
\label{eq:globalVarianceBias}
\end{align}
where we have used Equation \eqref{eq:continuousOptControlGeneral} for $u^*(t)$.
Thus, the stochastic optimum separates the irreducible part of the objective from the control-induced error. In particular, the excess cost of any admissible control is exactly its squared distance from $u^*$ in the weighted space $L^2_q$.

For the analysis of the uncertainty itself, we now restrict attention to the interior interval
\[
I_{\mathrm{int}}
=
\left[
\frac{1}{\lambda_{\min}}-\frac{1}{\lambda_{\max}},
\,
T-\frac{1}{\lambda_{\min}}
\right],
\]
where $q(t)=1$ and all velocity realizations contribute. We assume throughout this section that $I_{\mathrm{int}}$ is nonempty and introduce
\begin{align*}
H_{\mathrm{int}}(u)
:=
\int_{I_{\mathrm{int}}}
\mathbb E\!\left[
\left(
u(t)-D\!\left(t+\frac{1}{\lambda}\right)
\right)^2
\right]\,\mathrm dt .
\end{align*}
On this interval, an argument analogous to that used in \eqref{eq:globalVarianceBias} yields
\begin{align}
H_{\mathrm{int}}(u)
=
E_{\mathrm{int}}
+
\|u-u^*\|_{L^2(I_{\mathrm{int}})}^2,
\label{eq:interiorVarianceBias}
\end{align}
where
\begin{align}
E_{\mathrm{int}}
:=
H_{\mathrm{int}}(u^*)
=
\int_{I_{\mathrm{int}}}
\operatorname{Var}\!\left(
D\!\left(t+\frac{1}{\lambda}\right)
\right)\,\mathrm dt .
\label{eq:intrinsicError}
\end{align}

Equation~\eqref{eq:interiorVarianceBias} is the basic identity used below. It shows that all non-optimal control strategies can be compared directly through their $L^2$ distance from the stochastic optimum.

\subsection{Demand uncertainty and velocity uncertainty}

The irreducible error \eqref{eq:intrinsicError} contains two conceptually different contributions. Let
\[
v(s):=\operatorname{Var}(D(s)).
\]
Using the law of total variance with respect to $\lambda$ and assumption~(A5), we obtain
\begin{align*}
\operatorname{Var}\!\left(
D\!\left(t+\frac{1}{\lambda}\right)
\right)
&=
\mathbb E_\lambda\!\left[
v\!\left(t+\frac{1}{\lambda}\right)
\right]
+
\operatorname{Var}_\lambda\!\left(
m\!\left(t+\frac{1}{\lambda}\right)
\right).
\end{align*}
Accordingly,
\begin{align*}
E_{\mathrm{int}}
=
E_{\mathrm{dem}}
+
E_{\mathrm{vel}},
\end{align*}
with
\begin{align*}
E_{\mathrm{dem}}
&:=
\int_{I_{\mathrm{int}}}
\mathbb E_\lambda\!\left[
v\!\left(t+\frac{1}{\lambda}\right)
\right]\,\mathrm dt,
\qquad
E_{\mathrm{vel}}
:=
\int_{I_{\mathrm{int}}}
\operatorname{Var}_\lambda\!\left(
m\!\left(t+\frac{1}{\lambda}\right)
\right)\,\mathrm dt.
%\label{eq:velocityError}
\end{align*}
The first term represents the intrinsic randomness of the demand process. The second term is generated solely by the uncertain travel time: even if the demand were deterministic, different velocity realizations would sample the demand profile at different future times.

\begin{lemma}[Effect of velocity uncertainty]
\label{lem:velocityVariance}
Assume \emph{(A1)--(A5)} and let $m$ be Lipschitz continuous with Lipschitz constant $L_m$. Then
\begin{align}
E_{\mathrm{vel}}
\leq
|I_{\mathrm{int}}|\,L_m^2
\operatorname{Var}\!\left(\frac{1}{\lambda}\right)
\leq
\frac{|I_{\mathrm{int}}|\,L_m^2}{\lambda_{\min}^4}
\operatorname{Var}(\lambda).
\label{eq:velocityVarianceBound}
\end{align}
Consequently,
\[
E_{\mathrm{int}}
=
E_{\mathrm{dem}}
+
O(\operatorname{Var}(\lambda))
\]
as $\operatorname{Var}(\lambda)\to0$, provided the support of $\lambda$ remains in the fixed interval $[\lambda_{\min},\lambda_{\max}]$.
\end{lemma}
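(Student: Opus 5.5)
The plan is to bound the integrand of $E_{\mathrm{vel}}$ pointwise in $t\in I_{\mathrm{int}}$ and then integrate over $I_{\mathrm{int}}$.

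\emph{Step 1: the Lipschitz bound.} Fix $t\in I_{\mathrm{int}}$ and set $\tau=1/\lambda$ and $\bar\tau:=\mathbb E[\tau]$. Since $t\in I_{\mathrm{int}}$, every realization satisfies $t+\tau\in I_{\mathrm{obs}}$. Also $\bar\tau\in[1/\lambda_{\max},1/\lambda_{\min}]$, so $t+\bar\tau$ lies in the same interval; hence $m$ is evaluated only on $I_{\mathrm{obs}}$, where it is Lipschitz. I will use the variational characterization of the variance: $\operatorname{Var}(X)\le\mathbb E[(X-c)^2]$ for every constant $c$. Choosing $c=m(t+\bar\tau)$ gives
\[
\operatorname{Var}_\lambda\!\left(m\!\left(t+\tfrac1\lambda\right)\right)\le \mathbb E_\lambda\!\left[\left(m(t+\tau)-m(t+\bar\tau)\right)^2\right]\le L_m^2\,\mathbb E\!\left[(\tau-\bar\tau)^2\right]=L_m^2\operatorname{Var}\!\left(\tfrac1\lambda\right).
\]
Integrating over $I_{\mathrm{int}}$ yields the first inequality in \eqref{eq:velocityVarianceBound}.

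\emph{Step 2: passing from $1/\lambda$ to $\lambda$.} The map $g(\ell)=1/\ell$ on $[\lambda_{\min},\lambda_{\max}]$ is Lipschitz with constant $\sup|g'|=1/\lambda_{\min}^2$. Applying the same variational trick with $c=1/\bar\lambda$ gives
\[
\operatorname{Var}(1/\lambda)\le\mathbb E\!\left[(1/\lambda-1/\bar\lambda)^2\right]=\mathbb E\!\left[\frac{(\lambda-\bar\lambda)^2}{\lambda^2\bar\lambda^2}\right]\le\frac{\operatorname{Var}(\lambda)}{\lambda_{\min}^4},
\]
using $\lambda,\bar\lambda\ge\lambda_{\min}$. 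This proves the second inequality.

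\emph{Step 3: the asymptotic statement.} This follows directly from the decomposition $E_{\mathrm{int}}=E_{\mathrm{dem}}+E_{\mathrm{vel}}$ derived before the lemma. The constant $|I_{\mathrm{int}}|L_m^2/\lambda_{\min}^4$ depends only on the fixed support bounds, so it stays uniform as $\operatorname{Var}(\lambda)\to0$. Note that $E_{\mathrm{dem}}$ itself depends on the law of $\lambda$; the statement only claims the difference is $O(\operatorname{Var}(\lambda))$.

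\emph{Main obstacle.} The calculations above are routine. The only point needing care is that $m$ is evaluated within its domain of Lipschitz continuity, in particular at $t+\bar\tau$. This is handled by the choice of $I_{\mathrm{int}}$ and the fact that $\bar\tau$ lies in the convex hull of the support of $\tau$. If only a Lipschitz bound on $I_{\mathrm{obs}}$ were available, I would avoid $\bar\tau$ entirely. Instead I would use $\operatorname{Var}(X)=\tfrac12\mathbb E[(X-X')^2]$ with an independent copy $\lambda'$, which gives the same bound and needs only evaluations at realizations of $t+1/\lambda$.
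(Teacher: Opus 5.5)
Your proof is correct and follows the paper's argument. The paper applies $\operatorname{Var}(f(X))\le\operatorname{Lip}(f)^2\operatorname{Var}(X)$ twice, first to $r\mapsto m(t+r)$ and then to $x\mapsto 1/x$ with constant $\lambda_{\min}^{-2}$, and integrates over $I_{\mathrm{int}}$. You do the same, and simply prove that inequality inline via $\operatorname{Var}(X)\le\mathbb E[(X-c)^2]$. Your check that $t+\bar\tau\in I_{\mathrm{obs}}$ is harmless but not needed, since $m$ is assumed Lipschitz on its whole domain.
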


\begin{proof}
For any Lipschitz function $f$ and square-integrable random variable $X$,
\[
\operatorname{Var}(f(X))
\leq
\operatorname{Lip}(f)^2\operatorname{Var}(X).
\]
For fixed $t$, the map $r\mapsto m(t+r)$ has Lipschitz constant $L_m$, hence
\[
\operatorname{Var}_\lambda\!\left(
m\!\left(t+\frac{1}{\lambda}\right)
\right)
\leq
L_m^2\operatorname{Var}\!\left(\frac{1}{\lambda}\right).
\]
Moreover, $x\mapsto1/x$ is Lipschitz on $[\lambda_{\min},\lambda_{\max}]$ with constant $\lambda_{\min}^{-2}$. Therefore
\[
\operatorname{Var}\!\left(\frac{1}{\lambda}\right)
\leq
\lambda_{\min}^{-4}\operatorname{Var}(\lambda).
\]
Integration over $I_{\mathrm{int}}$ proves the claim.
\end{proof}

\subsection{Error of the mean-velocity proxy}

On the common interior interval, the stochastic optimum and the deterministic proxy are given by Equation \eqref{eq:continuousOptCOntrol} and \eqref{eq:ProxyControlContinuous}.
% \[
% u^*(t)
% =
% \mathbb E_\lambda\!\left[
% m\!\left(t+\frac{1}{\lambda}\right)
% \right],
% \qquad
% \bar u(t)
% =
% m\!\left(t+\frac{1}{\mathbb E[\lambda]}\right).
% \]
By \eqref{eq:interiorVarianceBias}, the performance loss of the proxy has the exact representation
\begin{align*}
H_{\mathrm{int}}(\bar u)-H_{\mathrm{int}}(u^*)
=
\int_{I_{\mathrm{int}}}
\left|
\mathbb E_\lambda\!\left[
m\!\left(t+\frac{1}{\lambda}\right)
\right]
-
m\!\left(t+\frac{1}{\mathbb E[\lambda]}\right)
\right|^2\,\mathrm dt.
\end{align*}

\begin{lemma}[Mean-velocity proxy error]
\label{lem:proxyError}
Assume \emph{(A1)--(A5)} and $m\in C^2([0,T])$. Then there exists a constant $C>0$, depending only on $\lambda_{\min}$, $\lambda_{\max}$, $|I_{\mathrm{int}}|$, and $\|m\|_{C^2}$, such that
\begin{align}
0
\leq
H_{\mathrm{int}}(\bar u)-H_{\mathrm{int}}(u^*)
\leq
C\,\operatorname{Var}(\lambda)^2.
\label{eq:proxyVarianceSquared}
\end{align}
Consequently,
\begin{align}
H_{\mathrm{int}}(\bar u)
=
E_{\mathrm{dem}}
+
O(\operatorname{Var}(\lambda))
+
O(\operatorname{Var}(\lambda)^2).
\label{eq:proxyTotalError}
\end{align}
\end{lemma}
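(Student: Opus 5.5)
Lower bound: $0\le H_{\mathrm{int}}(\bar u)-H_{\mathrm{int}}(u^*)$ follows immediately from the identity \eqref{eq:interiorVarianceBias}, since the difference equals $\|\bar u-u^*\|^2_{L^2(I_{\mathrm{int}})}$. This requires $I_{\mathrm{int}}\subset I_{\mathrm{proxy}}$, so that $\bar u$ is defined on $I_{\mathrm{int}}$; this holds because $1/\lambda_{\max}\le 1/\bar\lambda\le 1/\lambda_{\min}$. For the upper bound it therefore suffices to show, pointwise in $t\in I_{\mathrm{int}}$,
\[
\left|\mathbb E_\lambda\!\left[m\!\left(t+\tfrac1\lambda\right)\right]-m\!\left(t+\tfrac{1}{\bar\lambda}\right)\right|\le C_0\operatorname{Var}(\lambda),
\]
and then integrate the square over $I_{\mathrm{int}}$ to get $C=|I_{\mathrm{int}}|C_0^2$.

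For the pointwise estimate, write $g_t(\ell):=m(t+1/\ell)$ on $[\lambda_{\min},\lambda_{\max}]$. Since $m\in C^2$ and $\ell\mapsto 1/\ell$ is smooth on this interval, $g_t\in C^2$ with
\[
g_t''(\ell)=m''(t+1/\ell)\,\ell^{-4}+2m'(t+1/\ell)\,\ell^{-3}.
\]
Hence $\sup_\ell|g_t''|\le \|m\|_{C^2}(\lambda_{\min}^{-4}+2\lambda_{\min}^{-3})=:2C_0$, uniformly in $t$. Next I would Taylor expand $g_t$ around $\bar\lambda$ with Lagrange remainder:
\[
g_t(\lambda)=g_t(\bar\lambda)+g_t'(\bar\lambda)(\lambda-\bar\lambda)+\tfrac12 g_t''(\xi)(\lambda-\bar\lambda)^2 .
\]
Taking expectations, the linear term vanishes because $\mathbb E[\lambda-\bar\lambda]=0$, which leaves $|\mathbb E g_t(\lambda)-g_t(\bar\lambda)|\le C_0\operatorname{Var}(\lambda)$. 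The key point is to expand in the velocity variable rather than in the travel time $1/\lambda$. Expanding in $\tau$ around $\mathbb E[\tau]$ would leave a first-order bias $1/\bar\lambda-\mathbb E[1/\lambda]$, which is itself of order $\operatorname{Var}(\lambda)$ only after a second Taylor step. The direct expansion in $\ell$ avoids this.

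Consequence \eqref{eq:proxyTotalError}: combine the two results. Since $H_{\mathrm{int}}(\bar u)=E_{\mathrm{dem}}+E_{\mathrm{vel}}+(H_{\mathrm{int}}(\bar u)-H_{\mathrm{int}}(u^*))$, Lemma~\ref{lem:velocityVariance} gives $E_{\mathrm{vel}}=O(\operatorname{Var}(\lambda))$, and the bound just proved controls the last term by $O(\operatorname{Var}(\lambda)^2)$. Lemma~\ref{lem:velocityVariance} needs $m$ Lipschitz, which holds with $L_m=\|m'\|_\infty$ since $m\in C^2$.

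Main obstacle: there is no real difficulty. The only care needed is to make the constant uniform in $t$, which works because the bound on $g_t''$ depends only on $\|m\|_{C^2}$ and $\lambda_{\min}$, and to confirm that $t+1/\ell\in[0,T]$ for $t\in I_{\mathrm{int}}$, so that $m$ is evaluated inside its domain.
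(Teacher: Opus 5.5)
Your proposal is correct and follows essentially the same route as the paper: a second-order Taylor expansion of $\ell\mapsto m(t+1/\ell)$ about $\bar\lambda$ in the velocity variable, a uniform bound on its second derivative, elimination of the linear term by taking expectations, then squaring and integrating over $I_{\mathrm{int}}$, with \eqref{eq:proxyTotalError} obtained from Lemma~\ref{lem:velocityVariance}. You also make explicit several points the paper leaves implicit: the formula for the second derivative, the inclusion $I_{\mathrm{int}}\subset I_{\mathrm{proxy}}$, the fact that $t+1/\ell\in[1/\lambda_{\min},T]$, and that the lower bound comes from \eqref{eq:interiorVarianceBias}.
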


\begin{proof}
For fixed $t$, define
\[
h_t(\ell):=m\!\left(t+\frac{1}{\ell}\right).
\]
Since $\ell\in[\lambda_{\min},\lambda_{\max}]$ and $m\in C^2([0,T])$, the second derivative of $h_t$ is uniformly bounded in $t$ and $\ell$. Taylor expansion around $\bar\lambda=\mathbb E[\lambda]$ gives
\[
h_t(\lambda)
=
h_t(\bar\lambda)
+
h_t'(\bar\lambda)(\lambda-\bar\lambda)
+
\frac12 h_t''(\xi_{t,\lambda})(\lambda-\bar\lambda)^2
\]
for a suitable intermediate point $\xi_{t,\lambda}$. Taking expectations eliminates the linear term and yields, for a constant $C_0>0$,
\[
\left|
\mathbb E[h_t(\lambda)]-h_t(\bar\lambda)
\right|
\leq
C_0\operatorname{Var}(\lambda)
\]
uniformly in $t\in I_{\mathrm{int}}$. Squaring and integrating proves \eqref{eq:proxyVarianceSquared}. Equation~\eqref{eq:proxyTotalError} follows from Lemma~\ref{lem:velocityVariance}.
\end{proof}

This distinction is important: velocity uncertainty contributes at order $O(\operatorname{Var}(\lambda))$ to the irreducible stochastic error, whereas the \emph{additional} loss caused by replacing the random velocity with its mean is of order $O(\operatorname{Var}(\lambda)^2)$.

\subsection{Stability with respect to the velocity distribution}
\label{sec:distributionStability}

The mean-velocity proxy studied above is one particular approximation of the random transport dynamics. More generally, in applications the law of the transport velocity may only be known approximately, for instance because it has to be estimated from data. We therefore investigate how sensitively the optimal inflow depends on the probability distribution of the velocity.

Let $\mu$ and $\nu$ be two probability measures supported on the common interval $[\lambda_{\min},\lambda_{\max}]$. On a fixed compact interval $J\Subset I_{\mathrm{int}}$, define the corresponding optimal controls by
\begin{align}
\begin{split}
    u_\mu^*(t)
&:=
\int_{\lambda_{\min}}^{\lambda_{\max}}
m\!\left(t+\frac{1}{\ell}\right)\,\mu(\mathrm d\ell),
\\
u_\nu^*(t)
&:=
\int_{\lambda_{\min}}^{\lambda_{\max}}
m\!\left(t+\frac{1}{\ell}\right)\,\nu(\mathrm d\ell).
\label{eq:controlsTwoVelocityLaws}
\end{split}
\end{align}
We denote by $W_1$ the Wasserstein--1 distance on probability measures over $[\lambda_{\min},\lambda_{\max}]$.

\begin{theorem}[Stability with respect to the velocity law]
\label{thm:velocityLawStability}
Assume that $m$ is Lipschitz continuous with Lipschitz constant $L_m$. Then
\begin{align}
\|u_\mu^*-u_\nu^*\|_{L^\infty(J)}
&\leq
\frac{L_m}{\lambda_{\min}^2}
W_1(\mu,\nu),
\label{eq:stabilityLinfty}
\\
\|u_\mu^*-u_\nu^*\|_{L^2(J)}
&\leq
\frac{L_m\sqrt{|J|}}{\lambda_{\min}^2}
W_1(\mu,\nu).
\label{eq:stabilityL2}
\end{align}
In particular, the optimal control depends Lipschitz continuously on the velocity distribution with respect to the Wasserstein--1 distance.
\end{theorem}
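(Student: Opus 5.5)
The plan is to use Kantorovich--Rubinstein duality for $W_1$ on the compact interval $[\lambda_{\min},\lambda_{\max}]$:
\[
W_1(\mu,\nu)=\sup\Big\{\int f\,\mathrm d\mu-\int f\,\mathrm d\nu:\ \operatorname{Lip}(f)\le 1\Big\}.
\]
For fixed $t\in J$, we introduce as in the proof of Lemma~\ref{lem:proxyError} the function $h_t(\ell):=m(t+1/\ell)$ on $[\lambda_{\min},\lambda_{\max}]$. By \eqref{eq:controlsTwoVelocityLaws},
\[
u_\mu^*(t)-u_\nu^*(t)=\int h_t\,\mathrm d\mu-\int h_t\,\mathrm d\nu .
\]
The pointwise estimate therefore reduces to bounding the Lipschitz constant of $h_t$ uniformly in $t$.

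The first step is to note that $t+1/\ell\in[0,T]$ for $t\in J\Subset I_{\mathrm{int}}$ and $\ell\in[\lambda_{\min},\lambda_{\max}]$. Indeed, $t+1/\ell\le T-1/\lambda_{\min}+1/\lambda_{\min}=T$, so $h_t$ is well defined. For $\ell_1,\ell_2$ in the interval we then have
\[
|h_t(\ell_1)-h_t(\ell_2)|\le L_m\,\Big|\tfrac1{\ell_1}-\tfrac1{\ell_2}\Big|
=L_m\frac{|\ell_1-\ell_2|}{\ell_1\ell_2}\le \frac{L_m}{\lambda_{\min}^2}|\ell_1-\ell_2|.
\]
This is the same composition argument as in Lemma~\ref{lem:velocityVariance}. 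Applying duality to $f=\lambda_{\min}^2 h_t/L_m$ gives
\[
|u_\mu^*(t)-u_\nu^*(t)|\le \frac{L_m}{\lambda_{\min}^2}W_1(\mu,\nu)
\]
for every $t\in J$. Taking the supremum over $t$ yields \eqref{eq:stabilityLinfty}.

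As an alternative that avoids duality, one can take an optimal coupling $\pi$ of $\mu$ and $\nu$, which exists by compactness. One then writes the difference as $\int (h_t(x)-h_t(y))\,\pi(\mathrm dx,\mathrm dy)$ and bounds it by $\frac{L_m}{\lambda_{\min}^2}\int|x-y|\,\mathrm d\pi$. I would mention this as the more elementary route.

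Finally, \eqref{eq:stabilityL2} follows from \eqref{eq:stabilityLinfty} via $\|g\|_{L^2(J)}\le\sqrt{|J|}\,\|g\|_{L^\infty(J)}$. Measurability of $t\mapsto u^*_\mu(t)$ is immediate, since it is in fact continuous in $t$: it is Lipschitz with constant $L_m$ by dominated integration of the Lipschitz bound on $m$.

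There is no real obstacle. The only points needing care are that the domain of $m$ contains all arrival times, which is ensured by $J\subset I_{\mathrm{int}}$, and the correct normalization of the duality formula.
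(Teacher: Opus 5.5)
Your proposal is correct and follows the paper's proof step by step: the auxiliary function $h_t(\ell)=m(t+1/\ell)$, the uniform Lipschitz bound $L_m/\lambda_{\min}^2$, Kantorovich--Rubinstein duality pointwise in $t$, the supremum over $J$, and the $\sqrt{|J|}$ embedding for the $L^2$ estimate. Your additional checks are sound points that the paper leaves implicit: that arrival times $t+1/\ell$ stay in $[0,T]$ for $t\in J\subset I_{\mathrm{int}}$, that $u_\mu^*$ is measurable (indeed Lipschitz), and that the optimal-coupling argument gives an elementary alternative.
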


\begin{proof}
Fix $t\in J$ and define
\[
h_t(\ell)
:=
m\!\left(t+\frac{1}{\ell}\right).
\]
Since $m$ is Lipschitz continuous and the map $\ell\mapsto1/\ell$ is Lipschitz on $[\lambda_{\min},\lambda_{\max}]$ with constant $\lambda_{\min}^{-2}$, one has
\[
|h_t(\ell_1)-h_t(\ell_2)|
\leq
\frac{L_m}{\lambda_{\min}^2}
|\ell_1-\ell_2|.
\]
Hence $h_t$ is Lipschitz with
\[
\operatorname{Lip}(h_t)
\leq
\frac{L_m}{\lambda_{\min}^2},
\]
uniformly in $t\in J$. By the Kantorovich--Rubinstein dual representation of $W_1$,
\[
\left|
\int h_t\,\mathrm d\mu
-
\int h_t\,\mathrm d\nu
\right|
\leq
\operatorname{Lip}(h_t)\,W_1(\mu,\nu).
\]
Using \eqref{eq:controlsTwoVelocityLaws} gives
\[
|u_\mu^*(t)-u_\nu^*(t)|
\leq
\frac{L_m}{\lambda_{\min}^2}
W_1(\mu,\nu).
\]
Taking the supremum over $t\in J$ proves \eqref{eq:stabilityLinfty}. The $L^2$ estimate \eqref{eq:stabilityL2} follows immediately from

\[
\|f\|_{L^2(J)}
\leq
\sqrt{|J|}\,\|f\|_{L^\infty(J)}.
\qedhere
\]
\end{proof}

The theorem has a direct robustness interpretation: if the velocity law is estimated with an error of size $W_1(\mu,\nu)$, then the resulting optimal inflow changes by at most the same order. This stability result is independent of the particular mean-velocity approximation considered above.

The same estimate also yields a performance bound when a control optimized for the approximate law $\nu$ is applied to the system governed by the true law $\mu$. Let
\[
H_{\mu,J}(u)
:=
\int_J
\mathbb E_\mu\!\left[
\left(
u(t)-D\!\left(t+\frac{1}{\lambda}\right)
\right)^2
\right]\,\mathrm dt
\]
denote the interior objective associated with $\mu$.

\begin{corollary}[Performance under distributional misspecification]
\label{cor:velocityLawPerformance}
Under the assumptions of Theorem~\ref{thm:velocityLawStability},
\begin{align*}
0
\leq
H_{\mu,J}(u_\nu^*)
-
H_{\mu,J}(u_\mu^*)
\leq
\frac{L_m^2|J|}{\lambda_{\min}^4}
W_1(\mu,\nu)^2.
\end{align*}
\end{corollary}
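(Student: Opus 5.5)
The plan is to combine a variance--bias identity on $J$ under the true law $\mu$ with the stability estimate of Theorem~\ref{thm:velocityLawStability}. First I will redo the completing-the-square computation from \eqref{eq:globalVarianceBias} and \eqref{eq:interiorVarianceBias}, now with $I_{\mathrm{int}}$ replaced by $J$ and the law of $\lambda$ taken to be $\mu$. Since $J\Subset I_{\mathrm{int}}$, every velocity realization in $[\lambda_{\min},\lambda_{\max}]$ yields an arrival time in $I_{\mathrm{obs}}$. Hence the indicator is identically one and $q\equiv1$ on $J$.

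Expanding the square for fixed $t\in J$ and using (A5) to write $\mathbb E_\mu[D(t+1/\lambda)]=\int m(t+1/\ell)\,\mu(\mathrm d\ell)=u_\mu^*(t)$, I get, for any deterministic $u\in L^2(J)$,
\[
\mathbb E_\mu\!\left[\left(u(t)-D\!\left(t+\tfrac1\lambda\right)\right)^2\right]
=\left(u(t)-u_\mu^*(t)\right)^2+\operatorname{Var}_\mu\!\left(D\!\left(t+\tfrac1\lambda\right)\right).
\]
Integrating over $J$ gives
\[
H_{\mu,J}(u)=H_{\mu,J}(u_\mu^*)+\|u-u_\mu^*\|_{L^2(J)}^2 .
\]
One point needs checking: $u_\nu^*$ is an admissible, square-integrable control on $J$. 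This holds because $m$ is Lipschitz, so $m$ is bounded on $[0,T]$, and therefore $u_\nu^*$ is bounded and measurable, indeed continuous in $t$.

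Applying this identity with $u=u_\nu^*$ gives the lower bound $0$ immediately. For the upper bound, the squared $L^2$ distance is at most the square of \eqref{eq:stabilityL2}:
\[
\|u_\nu^*-u_\mu^*\|_{L^2(J)}^2\le \frac{L_m^2|J|}{\lambda_{\min}^4}W_1(\mu,\nu)^2,
\]
which is exactly the claimed bound.

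The only step requiring care is the variance--bias identity under $\mu$. There I must justify that the cross term reduces to $u(t)\,u_\mu^*(t)$. This follows from the independence assumption (A5), via conditioning on $\lambda$ (Fubini on the product structure), together with the integrability given by (A3). Everything else is a direct substitution of Theorem~\ref{thm:velocityLawStability}.
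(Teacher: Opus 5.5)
Your proposal is correct and follows the paper's proof: apply the variance--bias identity $H_{\mu,J}(u)=H_{\mu,J}(u_\mu^*)+\|u-u_\mu^*\|_{L^2(J)}^2$ under $\mu$ with $u=u_\nu^*$, then square \eqref{eq:stabilityL2}. You also spell out two points the paper leaves implicit, namely the derivation of that identity via (A5) and the fact that $u_\nu^*$ is an admissible bounded control.
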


\begin{proof}
Using the variance--bias decomposition for the objective corresponding to $\mu$,
\[
H_{\mu,J}(u)
=
H_{\mu,J}(u_\mu^*)
+
\|u-u_\mu^*\|_{L^2(J)}^2.
\]
Setting $u=u_\nu^*$ and applying \eqref{eq:stabilityL2} yields the result.
\end{proof}

\begin{remark}[Random travel-time laws]
\label{rem:travelTimeLawStability}
The preceding argument depends only on the random travel time $\tau=1/\lambda$. If $\eta$ and $\zeta$ are two probability laws for a positive random travel time $\tau$ and
\[
u_\eta^*(t)
=
\int m(t+r)\,\eta(\mathrm dr),
\qquad
u_\zeta^*(t)
=
\int m(t+r)\,\zeta(\mathrm dr),
\]
then
\[
\|u_\eta^*-u_\zeta^*\|_{L^2(J)}
\leq
L_m\sqrt{|J|}\,W_1(\eta,\zeta).
\]
Thus the stability mechanism is not specific to the representation $\tau=1/\lambda$ and extends directly to general random transport delays.
\end{remark}

\subsection{Error for the discrete controls}

We next quantify the additional error caused by restricting the control to piecewise constant functions. Let $u^{(n),*}$ denote the optimal piecewise constant control from Proposition~\ref{prop:optimalDiscreteControl}. The global identity \eqref{eq:globalVarianceBias} gives
\begin{align*}
H(u^{(n),*})-H(u^*)
=
\int_{I_{\mathrm{ctrl}}}
q(t)\,
|u^{(n),*}(t)-u^*(t)|^2\,\mathrm dt.
\end{align*}
Thus, no additional probabilistic distance is required: the discretization error is exactly the weighted projection error.

\begin{lemma}[Piecewise constant discretization error]
\label{lem:discretizationError}
Assume \emph{(A1)--(A5)} and let $J\Subset I_{\mathrm{int}}$. Suppose that $m$ is Lipschitz continuous and that the control partitions resolve the endpoints of $J$. Then 
\begin{align}
H_J(u^{(n),*})
=
H_J(u^*)
+
O(|\Pi_n|^2).
\label{eq:discreteInteriorRate}
\end{align}
If, in addition, $u^*$ is globally Lipschitz on $I_{\mathrm{ctrl}}$, then the same rate holds for the full objective:
\begin{align}
H(u^{(n),*})
=
H(u^*)
+
O(|\Pi_n|^2).
\label{eq:discreteObjectiveRate}
\end{align}
\end{lemma}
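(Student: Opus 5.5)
The plan is to reduce everything to the exact variance--bias identities already established, so that only a deterministic approximation estimate remains. On $J\Subset I_{\mathrm{int}}$ we have $q\equiv1$, and the argument leading to \eqref{eq:interiorVarianceBias}, applied with $I_{\mathrm{int}}$ replaced by $J$, gives
\[
H_J(u)=H_J(u^*)+\|u-u^*\|_{L^2(J)}^2 .
\]
Hence \eqref{eq:discreteInteriorRate} follows once we show $\|u^{(n),*}-u^*\|_{L^2(J)}=O(|\Pi_n|)$. Likewise, the global identity \eqref{eq:globalVarianceBias} together with $0\le q\le1$ reduces \eqref{eq:discreteObjectiveRate} to the bound $\int_{I_{\mathrm{ctrl}}}q\,|u^{(n),*}-u^*|^2\,\mathrm dt=O(|\Pi_n|^2)$.

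First I establish Lipschitz continuity of $u^*$ on $I_{\mathrm{int}}$. By Corollary~\ref{cor:interiorOptimalControl}, $u^*(t)=\int m(t+1/\ell)\phi_\lambda(\ell)\,\mathrm d\ell$ there. Since $\phi_\lambda$ is a probability density, $|u^*(t)-u^*(s)|\le L_m|t-s|$.

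Next I use the assumption that the partitions resolve the endpoints of $J$. This means every cell meeting the interior of $J$ is contained in $J\subset I_{\mathrm{int}}$. On such a cell, Proposition~\ref{prop:optimalDiscreteControl} reduces to the ordinary average \eqref{eq:pwConstantInterior}. For $t\in[t_i,t_{i+1})$ this gives
\[
|u_i^{(n),*}-u^*(t)|\le \frac{1}{t_{i+1}-t_i}\int_{t_i}^{t_{i+1}}|u^*(s)-u^*(t)|\,\mathrm ds\le L_m(t_{i+1}-t_i)\le L_m|\Pi_n|.
\]
Squaring and summing over the cells in $J$ yields $\|u^{(n),*}-u^*\|_{L^2(J)}^2\le L_m^2|J|\,|\Pi_n|^2$, which proves \eqref{eq:discreteInteriorRate}.

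For the global statement, the boundary cells are where $q<1$ and $u^{(n),*}$ is a $q$-weighted average. The key observation is that a weighted average of a Lipschitz function is still within $L\,(t_{i+1}-t_i)$ of each point value on the cell. Indeed, $u_i^{(n),*}-u^*(t)=\int q(s)(u^*(s)-u^*(t))\,\mathrm ds\big/\int q$ for every $t$ in the cell, so $|u_i^{(n),*}-u^*(t)|\le L\,|\Pi_n|$ when $u^*$ is globally Lipschitz with constant $L$. Cells with $Q_i^{(n)}=0$ contribute nothing, because $q=0$ a.e.\ on them. Combining with $q\le1$ gives
\[
\int_{I_{\mathrm{ctrl}}}q\,|u^{(n),*}-u^*|^2\,\mathrm dt\le L^2|I_{\mathrm{ctrl}}|\,|\Pi_n|^2 .
\]

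I expect the main obstacle to be this boundary treatment: justifying that the weighted-average bound holds pointwise on the set where $q>0$, and handling cells on which $q$ vanishes on a subset. The weighted-average identity above settles the first point without any regularity of $q$. The second point is harmless, since those sets carry zero weight in the objective.
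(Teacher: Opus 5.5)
Your proposal is correct and follows essentially the same route as the paper. In both arguments the variance--bias identities reduce the lemma to a deterministic cellwise approximation error, and Lipschitz continuity of $u^*$ (inherited from $m$ on the interior, assumed globally) bounds the ($q$-weighted) cell averages. The differences are cosmetic: on the interior you use the elementary Lipschitz averaging bound where the paper cites a cellwise Poincar\'e estimate, and on the boundary cells you bound the weighted average pointwise via its defining identity, whereas the paper uses its minimizing property and compares with the constant $c=u^*(t_i)$.
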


\begin{proof}
On $J\subset I_{\mathrm{int}}$, one has $q\equiv1$, and Lipschitz continuity of $m$ implies Lipschitz continuity of
\[
u^*(t)=\mathbb E_\lambda\!\left[m\!\left(t+\frac1\lambda\right)\right].
\]
The standard cellwise Poincar\'e estimate therefore yields
\[
\int_J |u^{(n),*}(t)-u^*(t)|^2\,\mathrm dt
\le C|\Pi_n|^2\|(u^*)'\|_{L^2(J)}^2.
\]
Together with the variance--bias identity, this proves \eqref{eq:discreteInteriorRate}.

For the global statement, assume that $u^*$ from \eqref{eq:continuousOptControlGeneral} is Lipschitz with constant $L_*$. Note that this is not trivial from $m$ being Lipschitz as this property also depends on $q(t)$ and $\Lambda(t)$. On a cell $I_i=[t_i,t_{i+1})$, the weighted coefficient $u_i^{(n),*}$ minimizes
\[
c\mapsto\int_{I_i}q(t)|u^*(t)-c|^2\,\mathrm dt.
\]
Choosing, for instance, $c=u^*(t_i)$ gives
\[
\int_{I_i}q(t)|u^*(t)-u_i^{(n),*}|^2\,\mathrm dt
\le
L_*^2 |I_i|^2\int_{I_i}q(t)\,\mathrm dt.
\]
Summing over all cells yields
\[
H(u^{(n),*})-H(u^*)
\le
L_*^2|\Pi_n|^2\int_{I_{\mathrm{ctrl}}}q(t)\,\mathrm dt,
\]
which proves \eqref{eq:discreteObjectiveRate}.
\end{proof}

Compared with the previous formulation, the role of temporal discretization is now explicit: it is a deterministic projection error and is independent of the probabilistic mechanism generating the random travel time.

\subsection{Piecewise constant proxy control}

Finally, let $\bar u^{(n)}$ be the cell-average approximation of the mean-velocity proxy on a partition of $I_{\mathrm{int}}$,
\[
\bar u_i^{(n)}
=
\frac{1}{t_{i+1}-t_i}
\int_{t_i}^{t_{i+1}}
m\!\left(t+\frac{1}{\mathbb E[\lambda]}\right)\,\mathrm dt.
\]
The total excess error can be separated into the proxy error and the temporal discretization error.

\begin{proposition}[Piecewise constant proxy error]
\label{prop:piecewiseProxyError}
Assume \emph{(A1)--(A5)} and $m\in C^2([0,T])$. Then
\begin{align*}
H_{\mathrm{int}}(\bar u^{(n)})
=
E_{\mathrm{int}}
+
O(\operatorname{Var}(\lambda)^2)
+
O(|\Pi_n|^2).
\end{align*}
Together with Lemma~\ref{lem:velocityVariance}, this yields
\begin{align*}
H_{\mathrm{int}}(\bar u^{(n)})
=
E_{\mathrm{dem}}
+
O(\operatorname{Var}(\lambda))
+
O(|\Pi_n|^2).
\end{align*}
\end{proposition}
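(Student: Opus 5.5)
The plan is to start from the interior variance--bias identity \eqref{eq:interiorVarianceBias}:
\[
H_{\mathrm{int}}(\bar u^{(n)})-E_{\mathrm{int}}=\|\bar u^{(n)}-u^*\|_{L^2(I_{\mathrm{int}})}^2 .
\]
The question then reduces to estimating a deterministic $L^2$ distance. I would split it with the triangle inequality through the continuous proxy $\bar u$ and its cell average. Let $P_n$ denote the $L^2$-orthogonal projection onto piecewise constant functions on the partition of $I_{\mathrm{int}}$, that is, the cell average. By definition, $\bar u^{(n)}=P_n\bar u$. I would write
\[
\bar u^{(n)}-u^* = P_n(\bar u-u^*) + (P_n u^*-u^*).
\]
Using $\|a+b\|^2\le 2\|a\|^2+2\|b\|^2$ then gives
\[
\|\bar u^{(n)}-u^*\|^2\le 2\|P_n(\bar u-u^*)\|^2+2\|P_nu^*-u^*\|^2 .
\]

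For the first term, $P_n$ is an orthogonal projection and hence a contraction in $L^2$. So $\|P_n(\bar u-u^*)\|^2\le\|\bar u-u^*\|_{L^2(I_{\mathrm{int}})}^2$. This is exactly the quantity bounded in the proof of Lemma~\ref{lem:proxyError}. There the Taylor argument gives $|u^*(t)-\bar u(t)|\le C_0\operatorname{Var}(\lambda)$ uniformly in $t$, since $m\in C^2$ and the second derivative of $h_t(\ell)=m(t+1/\ell)$ is bounded on $[\lambda_{\min},\lambda_{\max}]$. Squaring and integrating yields $O(\operatorname{Var}(\lambda)^2)$.

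For the second term, on $I_{\mathrm{int}}$ we have $q\equiv1$, so by \eqref{eq:pwConstantInterior} $P_nu^*$ is precisely the optimal piecewise constant control there. The cellwise Poincaré estimate from the proof of Lemma~\ref{lem:discretizationError} then gives $\|P_nu^*-u^*\|^2\le C|\Pi_n|^2\|(u^*)'\|^2_{L^2}$. Here $(u^*)'$ is bounded by $\|m'\|_\infty$, because $u^*(t)=\mathbb E_\lambda[m(t+1/\lambda)]$ and we may differentiate under the expectation for $m\in C^1$. Alternatively, one can use the Lipschitz bound $|u^*(t)-u^*(t_i)|\le L_m|t-t_i|$ directly. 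Either way this term is $O(|\Pi_n|^2)$. The partition is taken on $I_{\mathrm{int}}$ itself, so there is no endpoint resolution issue.

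Combining the two estimates proves the first display. The second follows by inserting $E_{\mathrm{int}}=E_{\mathrm{dem}}+E_{\mathrm{vel}}$ with $E_{\mathrm{vel}}=O(\operatorname{Var}(\lambda))$ from Lemma~\ref{lem:velocityVariance}, and absorbing $O(\operatorname{Var}(\lambda)^2)$ into $O(\operatorname{Var}(\lambda))$ since $\operatorname{Var}(\lambda)\le(\lambda_{\max}-\lambda_{\min})^2$ is bounded. The only mild subtlety, and the step I would check most carefully, is that $\bar u$ is well defined on all of $I_{\mathrm{int}}$. This needs $I_{\mathrm{int}}\subset I_{\mathrm{proxy}}$, which follows from $\lambda_{\min}\le\bar\lambda\le\lambda_{\max}$, so that $t+1/\bar\lambda\in[1/\lambda_{\min},T]$. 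The rest is a direct combination of the earlier lemmas.
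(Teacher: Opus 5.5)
Your argument is correct and follows the same overall strategy as the paper: start from the variance--bias identity, then split into a proxy part and a discretization part. The difference is the intermediate function you insert.

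The paper inserts the continuous proxy $\bar u$, writing $\bar u^{(n)}-u^*=(\bar u-u^*)+(\bar u^{(n)}-\bar u)$. It then uses Lemma~\ref{lem:proxyError} directly together with the piecewise constant approximation estimate applied to $\bar u$; no projection property is needed.

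You insert $P_nu^*$, the interior optimal piecewise constant control by \eqref{eq:pwConstantInterior}. You therefore need the contraction of $P_n$, as in Remark~\ref{rem:discreteProxyComparison}, and the approximation estimate for $u^*$, as in Lemma~\ref{lem:discretizationError}.

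Your route gives slightly more than you claim. $P_n(\bar u-u^*)$ lies in the range of $P_n$, while $P_nu^*-u^*$ lies in its orthogonal complement. So the factor $2$ is unnecessary, and Pythagoras gives the exact identity
\[
H_{\mathrm{int}}(\bar u^{(n)})=H_{\mathrm{int}}(P_nu^*)+\|P_n(\bar u-u^*)\|_{L^2(I_{\mathrm{int}})}^2 .
\]
This cleanly separates the discretization loss of the optimal discrete control from the projected proxy loss. The paper's decomposition is not orthogonal, which is why it needs the factor $2$.

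Your checks that $I_{\mathrm{int}}\subset I_{\mathrm{proxy}}$ (because $\lambda_{\min}\le\bar\lambda\le\lambda_{\max}$) and that $O(\operatorname{Var}(\lambda)^2)$ is absorbed into $O(\operatorname{Var}(\lambda))$ (because the support is fixed) fill in details the paper leaves implicit.
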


\begin{proof}
Using \eqref{eq:interiorVarianceBias},
\[
H_{\mathrm{int}}(\bar u^{(n)})-E_{\mathrm{int}}
=
\|\bar u^{(n)}-u^*\|_{L^2(I_{\mathrm{int}})}^2.
\]
Insert $\bar u$ and use
\[
\|\bar u^{(n)}-u^*\|_{L^2}^2
\leq
2\|\bar u-u^*\|_{L^2}^2
+
2\|\bar u^{(n)}-\bar u\|_{L^2}^2.
\]
The first term is $O(\operatorname{Var}(\lambda)^2)$ by Lemma~\ref{lem:proxyError}, while the second is $O(|\Pi_n|^2)$ by the standard piecewise constant approximation estimate.
\end{proof}

\begin{remark}[Discrete proxy comparison]
\label{rem:discreteProxyComparison}
On a fixed interval $J\Subset I_{\mathrm{int}}$, let $P_n$ denote the $L^2(J)$-orthogonal projection onto the piecewise constant control space. Since $u^{(n),*}=P_nu^*$ and $\bar u^{(n)}=P_n\bar u$ on $J$, the contraction property of $P_n$ gives
\[
H_J(\bar u^{(n)})-H_J(u^{(n),*})
=
\|P_n(\bar u-u^*)\|_{L^2(J)}^2
\leq
\|\bar u-u^*\|_{L^2(J)}^2.
\]
Hence Lemma~\ref{lem:proxyError} also yields
\[
H_J(\bar u^{(n)})-H_J(u^{(n),*})
=O(\operatorname{Var}(\lambda)^2).
\]
Thus the second-order proxy estimate applies directly to the piecewise constant comparison used in the numerical experiments below.
\end{remark}

% The resulting hierarchy of errors is therefore:
% \[
% \boxed{
% \text{demand uncertainty}
% \;+\;
% \text{random-travel-time uncertainty}
% \;+\;
% \text{proxy approximation}
% \;+\;
% \text{temporal discretization}.
% }
% \]
% The first two contributions are intrinsic to the stochastic system, whereas the latter two are introduced by the chosen control strategy. 
In particular, the leading-order influence of velocity uncertainty is $O(\operatorname{Var}(\lambda))$, while the additional mean-velocity proxy loss is of higher order, $O(\operatorname{Var}(\lambda)^2)$.

\section{Summary of the analytical results}
\label{sec:summaryControls}

The preceding analysis distinguishes two modeling choices: the stochastic transport model and its deterministic mean-velocity proxy and two implementation classes, i.e., continuous-time and piecewise constant controls. Table~\ref{tab:controlSummary} collects the corresponding control laws on the interior interval $I_{\mathrm{int}}$, where all velocity realizations contribute and temporal boundary effects are absent.
% \begin{table}[ht]
% \centering
% \caption{Control laws and excess costs on the interior interval $I_{\mathrm{int}}$. Here $m(t)=\mathbb E[D(t)]$, $\bar\lambda=\mathbb E[\lambda]$, and $E_{\mathrm{int}}=H_{\mathrm{int}}(u^*)$.}
% \label{tab:controlSummary}
% \renewcommand{\arraystretch}{1.45}
% \begin{tabular}{p{0.20\textwidth} p{0.35\textwidth} p{0.34\textwidth}}
% \hline
% \textbf{Control strategy}
% &
% \textbf{Control law}
% &
% \textbf{Objective / excess cost}
% \\
% \hline
% Stochastic, continuous
% &
% $\displaystyle
% u^*(t)
% =
% \mathbb E_\lambda\!\left[
% m\!\left(t+\frac{1}{\lambda}\right)
% \right]
% $
% &
% $\displaystyle
% H_{\mathrm{int}}(u^*)=E_{\mathrm{int}}
% $
% \\[2mm]

% Stochastic, piecewise constant
% &
% $\displaystyle
% u_i^{(n),*}
% =
% \frac{1}{t_{i+1}-t_i}
% \int_{t_i}^{t_{i+1}}u^*(t)\,\mathrm dt
% $
% &
% $\displaystyle
% H_{\mathrm{int}}(u^{(n),*})
% =
% E_{\mathrm{int}}
% +
% O(|\Pi_n|^2)
% $
% \\[2mm]

% Mean-velocity proxy, continuous
% &
% $\displaystyle
% \bar u(t)
% =
% m\!\left(
% t+\frac{1}{\bar\lambda}
% \right)
% $
% &
% $\displaystyle
% H_{\mathrm{int}}(\bar u)
% =
% E_{\mathrm{int}}
% +
% O(\operatorname{Var}(\lambda)^2)
% $
% \\[2mm]

% Mean-velocity proxy, piecewise constant
% &
% $\displaystyle
% \bar u_i^{(n)}=
% \frac{1}{t_{i+1}-t_i}
% \int_{t_i}^{t_{i+1}}
% m\!\left(
% t+\frac{1}{\bar\lambda}
% \right)\,\mathrm dt
% $
% &
% $\displaystyle
% H_{\mathrm{int}}(\bar u^{(n)})
% =
% E_{\mathrm{int}}
% +
% O(\operatorname{Var}(\lambda)^2)
% +
% O(|\Pi_n|^2)
% $
% \\
% \hline
% \end{tabular}
% \end{table}

\begin{table}[ht]
\centering
\caption{Control laws and excess costs on the interior interval $I_{\mathrm{int}}$. Here $m(t)=\mathbb E[D(t)]$, $\bar\lambda=\mathbb E[\lambda]$, and $E_{\mathrm{int}}=H_{\mathrm{int}}(u^*)$.}
\label{tab:controlSummary}
\renewcommand{\arraystretch}{1.5}
\setlength{\tabcolsep}{6pt}
\rowcolors{2}{gray!8}{white}
\begin{tabular}{p{0.20\textwidth} p{0.35\textwidth} p{0.34\textwidth}}
\toprule
\textbf{Control}\newline \textbf{strategy} & \textbf{Optimal control} & \textbf{Objective / excess cost} \\
\midrule
Stochastic,\newline continuous
&
$\displaystyle u^*(t) = \mathbb E_\lambda\!\left[m\!\left(t+\frac{1}{\lambda}\right)\right]$\newline
{\footnotesize\color{gray!70!black} Eq.~\eqref{eq:continuousOptCOntrol}}
&
$\displaystyle H_{\mathrm{int}}(u^*)=E_{\mathrm{int}}\newline \phantom{quandm}= E_\text{dem} + O(\operatorname{Var}(\lambda))$ \newline
{\footnotesize\color{gray!70!black} Eq.~\eqref{eq:intrinsicError}, Lemma \ref{lem:velocityVariance}}
\\[3mm]
Stochastic, piecewise constant
&
$\displaystyle u_i^{(n),*} = \frac{1}{t_{i+1}-t_i}\int_{t_i}^{t_{i+1}}u^*(t)\,\mathrm dt$
\newline
{\footnotesize\color{gray!70!black} Eq.~\eqref{eq:pwConstantInterior}}
&
$\displaystyle H_{\mathrm{int}}(u^{(n),*}) = E_{\mathrm{int}} + O(|\Pi_n|^2)$ \newline
{\footnotesize\color{gray!70!black} Lemma~\ref{lem:discretizationError}}
\\[3mm]
Mean-velocity proxy, continuous
&
$\displaystyle \bar u(t) = m\!\left(t+\frac{1}{\bar\lambda}\right)$\newline
{\footnotesize\color{gray!70!black} Eq.~\eqref{eq:ProxyControlContinuous}}
&
$\displaystyle H_{\mathrm{int}}(\bar u) = E_{\mathrm{int}} + O(\operatorname{Var}(\lambda)^2)$ \newline
{\footnotesize\color{gray!70!black} Lemma~\ref{lem:proxyError}}
\\[3mm]
Mean-velocity proxy, piecewise constant
&
$\displaystyle \bar u_i^{(n)}=\newline\frac{1}{t_{i+1}-t_i}\int_{t_i}^{t_{i+1}} m\!\left(t+\frac{1}{\bar\lambda}\right)\,\mathrm dt$\newline
{\footnotesize\color{gray!70!black} Eq.~\eqref{eq:ProxyControlPWConstant}}
&
$\displaystyle H_{\mathrm{int}}(\bar u^{(n)}) \newline= E_{\mathrm{int}} + O(\operatorname{Var}(\lambda)^2) + O(|\Pi_n|^2)$ \newline
{\footnotesize\color{gray!70!black} Prop.~\ref{prop:piecewiseProxyError}}
\\
\bottomrule
\end{tabular}
\end{table}

The minimal stochastic error itself decomposes as
\begin{align*}
E_{\mathrm{int}}
=
E_{\mathrm{dem}}
+
E_{\mathrm{vel}},
\qquad
E_{\mathrm{vel}}
=
O(\operatorname{Var}(\lambda)),
\end{align*}
where $E_{\mathrm{dem}}$ is generated by the intrinsic randomness of the demand and $E_{\mathrm{vel}}$ by the random travel time. Consequently, the leading-order effect of velocity uncertainty is already present in the optimally controlled stochastic system. By contrast, replacing the random velocity by its mean introduces only the higher-order excess cost
\[
H_{\mathrm{int}}(\bar u)-H_{\mathrm{int}}(u^*)
=
O(\operatorname{Var}(\lambda)^2).
\]
Temporal discretization produces an additional $O(|\Pi_n|^2)$ contribution to the objective.

For the quadratic discretization rate, the rigorous statement is made on a fixed compact interval $J\Subset I_{\mathrm{int}}$; see Lemma~\ref{lem:discretizationError}. A global rate requires the additional assumption that $u^*$ is Lipschitz on $I_{\mathrm{ctrl}}$.

The table deliberately reports the simple interior formulas. On the full control interval $I_{\mathrm{ctrl}}$, temporal boundary effects have to be taken into account. The stochastic optimum is then the conditional average
\[
u^*(t)
=
\frac{
\displaystyle
\int_{\Lambda(t)}
m\!\left(t+\frac{1}{\ell}\right)
\phi_\lambda(\ell)\,\mathrm d\ell
}{
\displaystyle
%\int_{\Lambda(t)}\phi_\lambda(\ell)\,\mathrm d\ell
q(t)
},
\]
and the piecewise constant stochastic control is its $q$-weighted cell average; see Theorem~\ref{thm:continuousOptimalControl} and Proposition~\ref{prop:optimalDiscreteControl}. Thus, the interior formulas in Table~\ref{tab:controlSummary} describe the genuine uncertainty mechanism, whereas the additional weights near the temporal boundaries originate solely from the fixed downstream observation window.

This separation will also guide the numerical experiments. We first verify the predicted dependence on the velocity variance and the temporal mesh size, and then assess how well the mean-velocity proxy performs beyond the linear setting.

\section{Numerical analysis}
\label{sec:numericalAnalysis}

The numerical experiments are organized around the error mechanisms identified in Sections~\ref{sec:errorAnalysis} and \ref{sec:summaryControls}. We first study temporal control discretization and verify the convergence of piecewise constant controls. We then decrease the variance of the random transport velocity in order to examine the intrinsic random-travel-time error and the accuracy of the mean-velocity proxy. Finally, we consider a nonlinear nonlocal transport model for which no explicit optimal control formula is available and investigate whether the proxy strategy remains effective beyond the linear theory.

\subsection{Numerical setup}

For the linear model, the transport equation \eqref{eq:stochasticPDE} is discretized by a first-order upwind method. We use a time step $\Delta t=5\cdot10^{-4}$ and, for each realization of the velocity, choose $\Delta x=\lambda(\omega)\Delta t$, so that the CFL number equals one. The last spatial cell is adjusted to match the endpoint $x=1$. With this choice, the numerical propagation follows the characteristic transport as closely as possible and avoids an additional CFL-induced smearing of the effects studied below.

The uncertain demand is modeled by a time-inhomogeneous Jacobi process with state space $[0,4]$,
\begin{align}
\mathrm dD(t)
=
\kappa\bigl(\theta(t)-D(t)\bigr)\,\mathrm dt
+
\sigma\sqrt{(D(t)-a)(b-D(t))}\,\mathrm dW(t),
\label{eq:JacobiSDE}
\end{align}
with
\[
a=0,\qquad b=4,\qquad D(0)=1.6,\qquad
\kappa=4,\qquad
\theta(t)=2+\sin(\pi t),\qquad
\sigma=0.15.
\]
We approximate \eqref{eq:JacobiSDE} by the Euler--Maruyama method with time step $\Delta t=10^{-3}$ and project every update onto $[0,4]$. This is the only point in the paper where a particular stochastic demand model is required; the analytical results of the previous sections use only the first two demand moments. To estimate mean outflow and demand quantities, we use a Monte Carlo simulation with 25,000 runs.

To separate the asymptotic effects from the irreducible demand variance, we report not only the objective values themselves but, where appropriate, also the corresponding \emph{excess costs}. In particular, the temporal discretization experiment is compared through
\[
H_J(u^{(n),*})-H_J(u^*),
\]
whereas the proxy experiment is compared through
\[
H_{J}(\bar u)-H_{J}(u^*).
\]
Here, $J$ denotes a suitably chosen compact interior interval. These are precisely the quantities for which Section~\ref{sec:errorAnalysis} predicts second-order behavior.

\subsection{Temporal discretization: convergence of piecewise constant controls}
\label{sec:numTimeDiscretization}
We first fix the velocity distribution to
\[
\lambda\sim\mathcal U([1,3])
\]
and take $T=16$ and $J=\left[2,\,T-2\right]$. The control grid is equidistant with cell length
\[
|\Pi_n|=2^k,\qquad k=-4,\ldots,1.
\]
Lemma~\ref{lem:discretizationError} predicts on this fixed interior interval
\[
H_J(u^{(n),*})-H_J(u^*)=O(|\Pi_n|^2).
\]

% Figure~\ref{fig:pwcLimit_error} shows the total-objective plot and contains the non-vanishing intrinsic stochastic error. Hence its log--log slope is not expected to remain equal to two once this background contribution dominates. We show not only the classical stochastic velocity framework and the piecewise constant stochastic control framework, but also a version where also in the forward problem the velocity is deterministic like in \cite{Goettlich2022}. As expected the latter error is the smallest, as uncertainty enters exclusively by the uncertain demand. The classical stochastic continuous-time problem shows a slightly lower error than using the proxy-control which is also expected.
% \begin{figure}[ht!]
%     \centering
%     \includegraphics[width=0.75\linewidth]{figures_25_05_29/error_PWC_withNoise.pdf}
%     \caption{Numerical result for decreasing lengths of the piecewise constant control intervals under uncertain demand. }
%     \label{fig:pwcLimit_error}
% \end{figure}
Figure~\ref{fig:pwcLimit_inflowOutflow} illustrates the expected convergence of the piecewise constant controls toward the continuous-time strategy. The left panel shows how, starting from a constant inflow level, the control increasingly resolves the sinusoidal pattern of the mean demand as the temporal grid is refined. The right panel compares the corresponding outflows with the mean demand. The outflows are temporally shifted and averaged versions of the controls induced by the random transport dynamics, and they approach a sinusoidal profile with the same periodicity as the expected demand. The remaining mismatch between mean outflow and mean demand is caused by the intrinsic stochastic effect of the uncertain velocity.

\begin{figure}[htbp]
    \centering
    \begin{minipage}{0.48\textwidth}
        \centering
        \includegraphics[width=\linewidth]{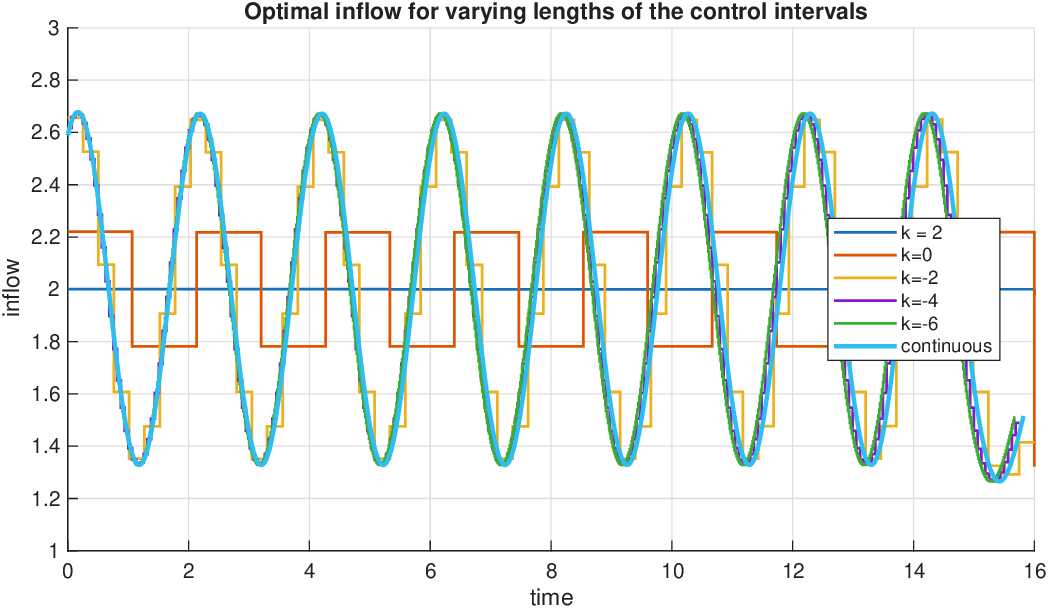}
    \end{minipage}
    \hfill
    \begin{minipage}{0.48\textwidth}
        \centering
        \includegraphics[width=\linewidth]{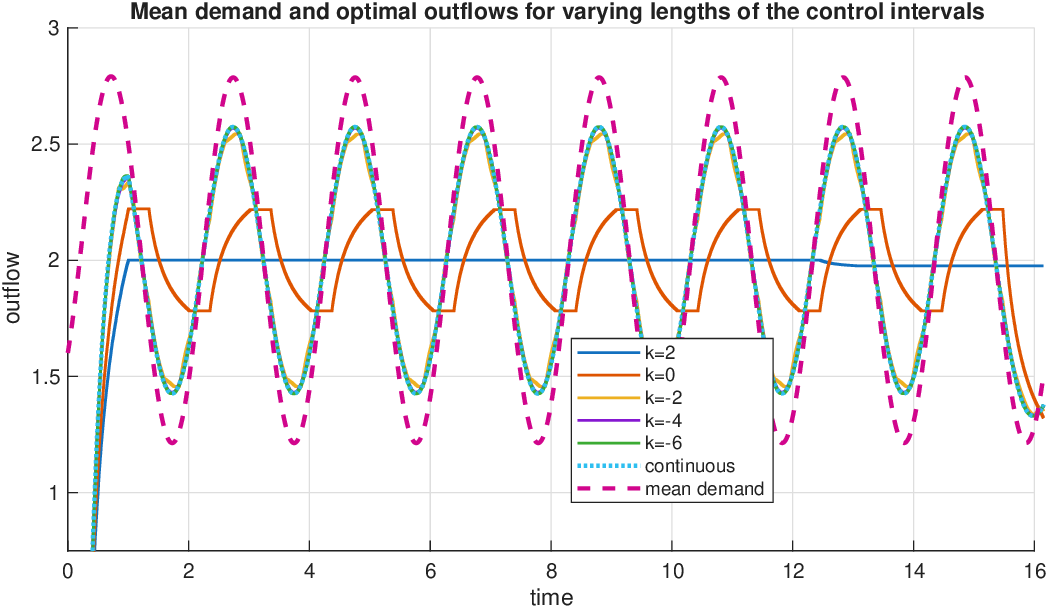}
    \end{minipage}
    \caption{Optimal inflows (left), outflows, and mean demand (right) for decreasing lengths of the piecewise constant control intervals.}
    \label{fig:pwcLimit_inflowOutflow}
\end{figure}

The theoretically relevant convergence quantity is the excess cost $H_J(u^{(n),*})-H_J(u^*)$, shown in Figure~\ref{fig:numericalConvergenceTime}. The observed numerical convergence rates are very close to the predicted rate of 2 for both the continuous-time and piecewise constant controls. 

\begin{figure}[htbp]
    \centering
    \includegraphics[width=0.6\linewidth]{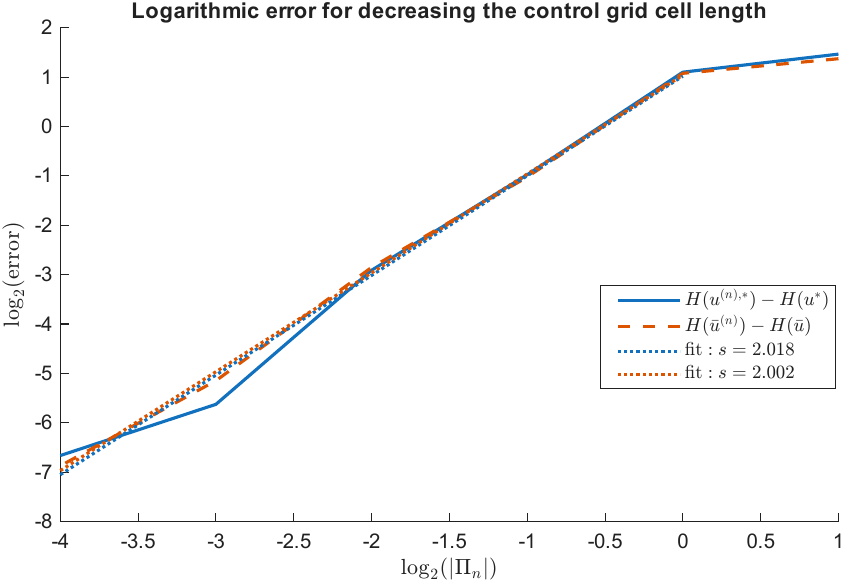}
    \caption{Log--log plot of the excess cost for decreasing piecewise constant control intervals, for continuous-time and piecewise constant inflows.}
    \label{fig:numericalConvergenceTime}
\end{figure}

\subsection{Velocity uncertainty and the mean-velocity proxy}
\label{sec:numVelocityVariance}
We keep the mean velocity fixed at $2$ and reduce the support of the uniform distribution symmetrically,
\[
\lambda_{\min}
=
2-\frac12\sqrt{2^k},
\qquad
\lambda_{\max}
=
2+\frac12\sqrt{2^k},
\qquad k=-6,\ldots,3,
\]
while we now fix $|\Pi_n| = \frac{1}{2}$
Then $\operatorname{Var}(\lambda)$ is proportional to $2^k$. This choice ensures $\frac{1}{2}<\lambda_{\min}$, so that $J=[2,T-2]$ is contained in the interior interval for every velocity distribution considered.

The left panel of Figure~\ref{fig:varLambda_error}, analogous to the right panel of Figure~\ref{fig:pwcLimit_inflowOutflow}, compares the optimal outflows for different velocity variances with the expected demand. As the variance decreases, the amplitude of the expected outflow increases and approaches that of the mean demand.
The right panel of Figure~\ref{fig:varLambda_error} shows the logarithmic error evolution as $\operatorname{Var}(\lambda)\to0$ for continuous-time controls (solid lines) and piecewise constant controls (dashed lines). In addition to the stochastic formulations, we include a reference case in which the velocity is also taken to be deterministic in the forward simulation. This reference exhibits a constant baseline error arising solely from demand uncertainty. As the velocity law concentrates at $\lambda=2$, the stochastic models converge to this deterministic reference, while the corresponding proxy models rapidly approach the stochastic ones.

\begin{figure}[htbp]
    \centering
    \begin{minipage}{0.48\textwidth}
       \centering
        \includegraphics[width=\linewidth]{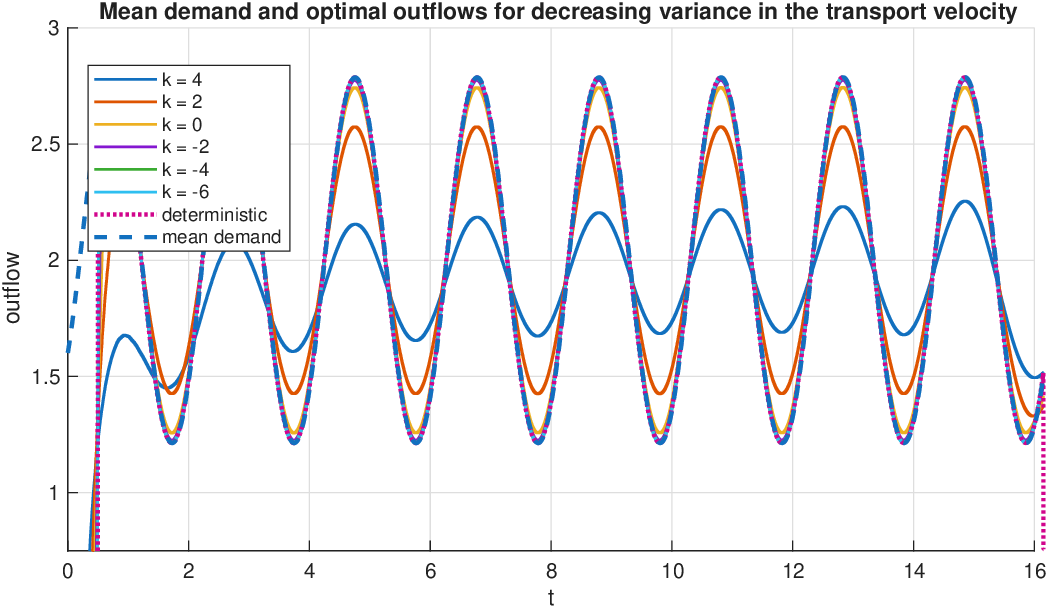}
    \end{minipage}
    \hfill
    \begin{minipage}{0.48\textwidth}
        \centering
        \includegraphics[width=\linewidth]{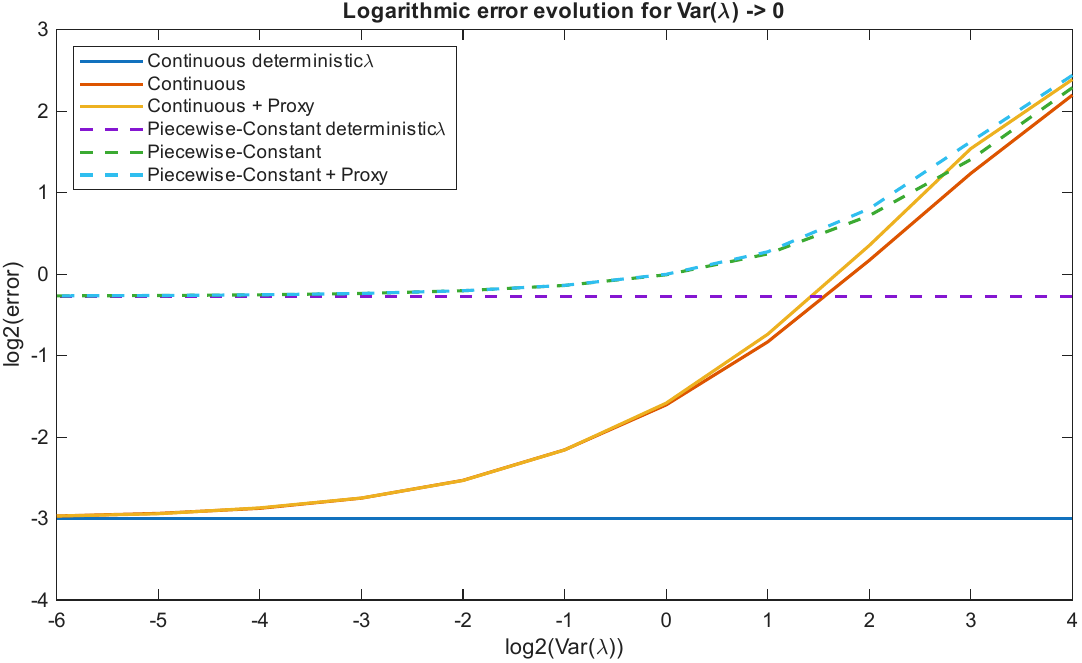}
    \end{minipage}
    \caption{Optimal outflows and mean demand for decreasing variance in the transport velocity (left) and total-objective results for decreasing variance in $\lambda$ (right).}
    \label{fig:varLambda_error}
\end{figure}

This convergence is shown more explicitly in Figure~\ref{fig:numericalConvergenceSpeed}, where we report the excess costs 
\[
H_J(\bar u)-H_J(u^*), \qquad H_J(\bar u^{(n)})-H_J(u^{(n),*}).
\]
The results confirm the predicted second-order behavior; for the piecewise constant controls, the observed numerical rate is slightly above 2. 

\begin{figure}[htbp]
    \centering
    \includegraphics[width=0.6\linewidth]{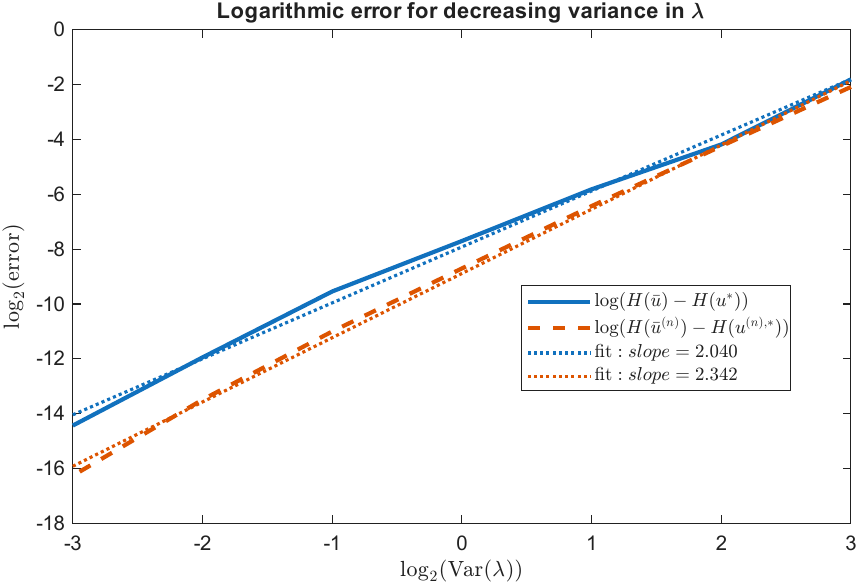}
    \caption{Log--log plot of the excess cost for decreasing transport-velocity variance, for continuous-time and piecewise constant inflows.}
    \label{fig:numericalConvergenceSpeed}
\end{figure}

% The theory distinguishes the intrinsic random-travel-time contribution,
% \[
% E_{\mathrm{vel},J}=O(\operatorname{Var}(\lambda)),
% \]
% from the additional loss of the mean-velocity proxy,
% \[
% H_J(\bar u)-H_J(u^*)=O(\operatorname{Var}(\lambda)^2).
% \]
% The total-objective curve in Figure~\ref{fig:varLambda_error} cannot by itself distinguish these two orders.

% \begin{figure}[htbp]
%     \centering
%     \begin{minipage}{0.48\textwidth}
%         \centering
%         \includegraphics[width=\linewidth]{figures_25_05_29/errorVarLambda_withNoise_control.pdf}
%     \end{minipage}
%     \hfill
%     \begin{minipage}{0.48\textwidth}
%         \centering
%         \includegraphics[width=\linewidth]{figures_25_05_29/errorVarLambda_withNoise_outflowDemand.pdf}
%     \end{minipage}
%     \caption{Existing continuous controls, outflows, and mean demand for decreasing variance in the transport velocity.}
%     \label{fig:Var_contLimit_inflowOutflow}
% \end{figure}

% \begin{figure}[htbp]
%     \centering
%     \begin{minipage}{0.48\textwidth}
%         \centering
%         \includegraphics[width=\linewidth]{figures_25_05_29/errorVarLambda_withNoise_control_piecewiseconstant.pdf}
%     \end{minipage}
%     \hfill
%     \begin{minipage}{0.48\textwidth}
%         \centering
%         \includegraphics[width=\linewidth]{figures_25_05_29/errorVarLambda_withNoise_outflowDemand_piecewiseconstant.pdf}
%     \end{minipage}
%     \caption{Existing piecewise constant controls, outflows, and mean demand for decreasing variance in the transport velocity.}
%     \label{fig:Var_PWCLimit_inflowOutflow}
% \end{figure}

\subsection{Boundary effects of the fixed observation window}
We next illustrate why the control must be adjusted near the temporal boundaries rather than using the shifted expected demand throughout. We take $\lambda\sim\mathcal U([1,3])$ and plot, on the full control interval,
\[
I_{\mathrm{ctrl}}=[0,T-1/3]
\]
both the corrected control
\[
u^*(t)=
\frac{\int_{\Lambda(t)}m(t+1/\ell)\phi_\lambda(\ell)\,\mathrm d\ell}
{\int_{\Lambda(t)}\phi_\lambda(\ell)\,\mathrm d\ell}
\]
and the unconditioned interior expression
\[
\mathbb E_\lambda[m(t+1/\lambda)].
\]
We set $T=6$ to emphasize the boundary region and keep all other parameters unchanged.
The two curves coincide on $I_{\mathrm{int}}=[2/3,T-1]$ and differ near the temporal boundaries. Hence, the left panel of Figure~\ref{fig:boundaryAdjustment} directly illustrates the boundary correction introduced in Section~\ref{sec:speedRV}. Lemma~\ref{lem:discretizationError} establishes second-order convergence for the interior objective and, under the additional assumption that the optimal inflow is globally Lipschitz continuous, also for the full objective including boundary effects. The numerical experiment exhibits a convergence rate close to 2 for the full objective as well. This is consistent with the global estimate of Lemma~\ref{lem:discretizationError} under its additional regularity assumption, but no global rate is claimed here beyond this numerical evidence.

%While Theorem \ref{thm:discreteConvergence} states convergence, there is no statement on the convergence rate, in particular it is not ensured that order 2 is reached as for $J\Subset I_\text{int}$. This is underlined by our simulations where we obtain numerical convergence rates of below 2 but above 1.
% \begin{figure}[htbp]
%     \centering
%     \includegraphics[width=0.6\linewidth]{figures_25_05_29/Figure_154.pdf}
%     \caption{Comparison of the optimal control and the unconditioned expression for the interior parts.}
%     \label{fig:boundaryAdjustment}
% \end{figure}

% \begin{figure}[htbp]
%     \centering
%     \includegraphics[width=0.6\linewidth]{figures_25_05_29/Figure_18_boundary.pdf}
%     \caption{Temporäres Bild für den Nachweis, dass wir Konvergenzrate 2 verlieren, wenn wir den Rand mit rein nehmen, längere Simulation kann gerne noch folgen.}
%     \label{fig:boundaryAdjustment_convergenceRate}
% \end{figure}

\begin{figure}[htbp]
    \centering
    \begin{subfigure}{0.48\textwidth}
        \centering
         \includegraphics[width=\linewidth]{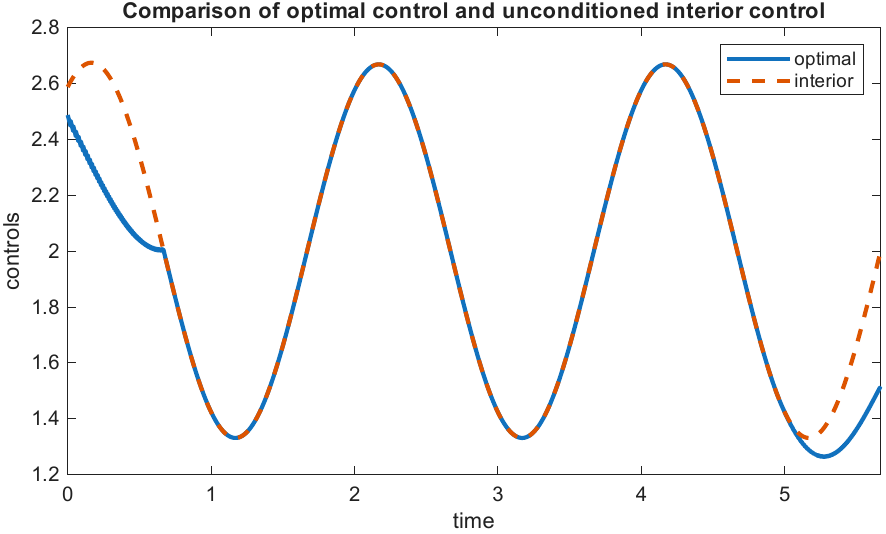}
    \end{subfigure}
    \hfill
    \begin{subfigure}{0.48\textwidth}
        \centering
         \includegraphics[width=0.97\linewidth]{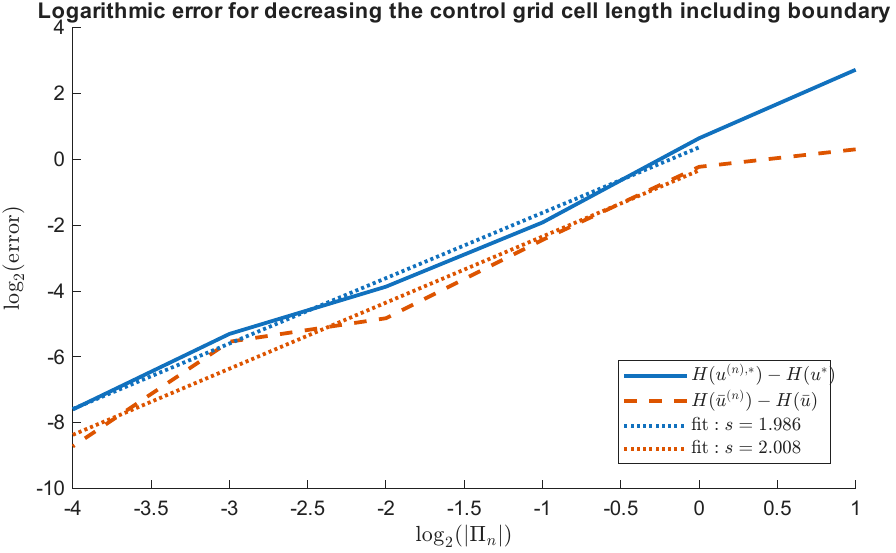}
    \end{subfigure}
    \caption{Comparison of the optimal control with the unconditioned interior expression (left) and numerical convergence rates including boundary effects (right).}
    \label{fig:boundaryAdjustment}
\end{figure}

\subsection{Beyond the linear theory: a nonlocal transport model}
\label{sec:numNonlocal}
We finally investigate whether the mean-velocity proxy remains useful when the explicit linear theory is no longer available. We consider the time-independent stochastic nonlocal velocity model motivated by \cite{Boehme2026},
\begin{align}
\rho_t
+
\partial_x\!\left(
\rho\,
(W_\eta *_x v_\varepsilon(\rho))
\right)
=
0,
\label{eq:nonlocalModel}
\end{align}
where
\[
v_\varepsilon(\rho)
=
\max\{0,v(\rho)+\varepsilon\},
\qquad
v(\rho)=1-\frac{\rho}{2},
\qquad
\varepsilon\sim\mathcal U([-\tau,\tau]),
\]
and
\[
W_\eta(x)
=
\frac{3}{2\eta}
\left(1-\frac{x^2}{\eta^2}\right),
\qquad
\eta=0.2,
\]
on the support of the kernel. The perturbation $\varepsilon$ is random but constant in time for each realization.

As a deterministic proxy, we replace the random velocity law by its pointwise expectation
\[
\bar v(\rho):=\mathbb E[v_\varepsilon(\rho)].
\]
We consider the spatial domain $[0,1]$ and set $T=10$. For the numerical discretization, we use the upwind-type schemes presented in \cite{Boehme2026} with step sizes $\Delta x=0.05$ and $\Delta t=\Delta x/2$. Both the stochastic and proxy control problems are solved numerically with \texttt{fmincon}. The objective is evaluated after an initial transient phase on $[3,T]$. Whenever an expectation is approximated by Monte Carlo simulation, we use 200 samples.

% \begin{figure}[ht!]
%     \centering
%     \includegraphics[width=0.5\linewidth]{figures_25_05_29/nonlocal_comp_inflow_outflow.pdf}
%     \label{fig:nonlocalF1}
% \end{figure}

\begin{figure}[htbp]
    \centering
    \begin{subfigure}{0.48\textwidth}
        \centering
    \includegraphics[width=0.96\linewidth]{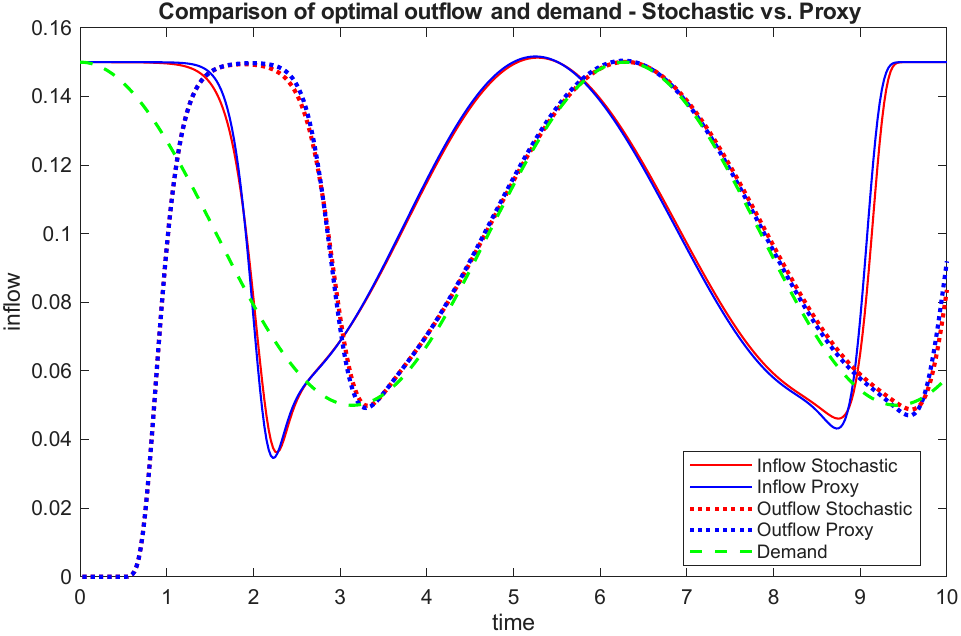}
    \end{subfigure}
    \hfill
    \begin{subfigure}{0.48\textwidth}
        \centering
         \includegraphics[width=\linewidth]{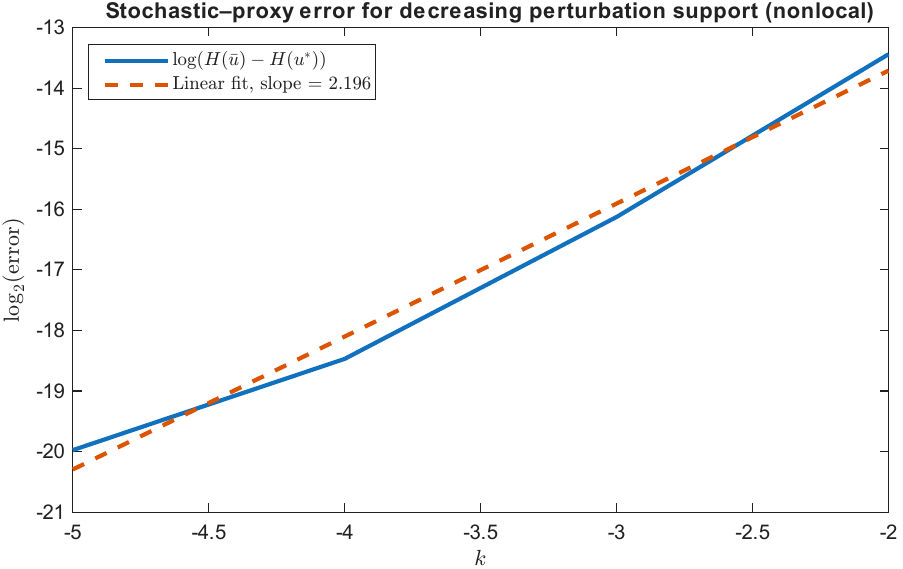}
    \end{subfigure}
    \caption{Comparison of the outflows generated by the stochastic and proxy controls with the mean demand (left), and the corresponding error and empirical convergence rate for the nonlocal model (right).}
    \label{fig:nonlocalF1}
\end{figure}

For the representative perturbation $\varepsilon\sim\mathcal U([-0.2,0.2])$, the reported objective values are
\[
4.062\times10^{-4}
\quad\text{for the stochastic control,}
\qquad
4.457\times10^{-4}
\quad\text{for the proxy control,}
\]
confirming, as in the local case, that the proxy introduces an additional error.

% \begin{figure}[ht!]
%     \centering
%     \includegraphics[width=0.5\linewidth]{figures_25_05_29/nonlocal_var_err.pdf}
%     \caption{Existing error comparison for decreasing support of the nonlocal velocity perturbation. No convergence order is claimed without additional theory or recomputed evidence.}
%     \label{fig:nonlocalF2}
% \end{figure}

The right panel of Figure~\ref{fig:nonlocalF1} examines the behavior of the stochastic and proxy controls as the support of the perturbation shrinks according to
\[
\varepsilon\sim\mathcal U([-2^{k},2^{k}]),
\qquad k=-6,\ldots,-2.
\]
The purpose of this experiment is to test whether the stochastic and proxy solutions approach each other as the uncertain velocity law concentrates. Since no analogue of Lemma~\ref{lem:proxyError} has been proved for the nonlinear nonlocal model, no theoretical convergence order is asserted. The numerical results exhibit an empirical rate of approximately 2.2, suggesting that behavior similar to the local second-order convergence may also occur in the nonlocal setting. Establishing such a rate theoretically, however, remains an open question.

The main practical motivation for using the proxy in the absence of an explicit optimal control formula, as available in the local case, is its computational efficiency. For the stochastic model, each evaluation of the sample-average objective requires one forward simulation for every Monte Carlo realization, making the stochastic optimization substantially more expensive. In contrast, each objective evaluation for the proxy model requires only a single deterministic forward solve. On a machine equipped with six CPU cores, the stochastic optimization based on 200 Monte Carlo samples required approximately 60 times the computational time of the proxy optimization. For larger numbers of Monte Carlo samples, this computational advantage of the proxy is expected to become even more relevant. Thus, in settings without explicit optimal controls and for small perturbation variances, the proxy provides a computationally efficient approximation that remains close to the optimal stochastic solution.

\section{Conclusion}
\label{sec:conclusion}

We have studied optimal inflow control for a linear transport equation under simultaneous uncertainty in downstream demand and transport velocity. The random velocity induces a random travel time and thereby changes the relation between an inflow decision and the demand observed at its arrival time. Retaining a fixed downstream observation window additionally gives rise to temporal boundary effects. We derived explicit optimal controls in continuous time and for piecewise constant control spaces and characterized their relation under temporal refinement.

Our error analysis separates the effects of demand uncertainty, random travel times, and control discretization. The contribution generated by the uncertain transport velocity admits a bound that is linear in its variance, whereas the additional performance loss of the deterministic mean-velocity proxy admits a higher-order, quadratic bound. Temporal discretization likewise yields a quadratic error bound under suitable regularity assumptions. Moreover, the optimal control is Lipschitz stable with respect to perturbations of the velocity distribution in the Wasserstein distance. These results provide a theoretical explanation for why the computationally inexpensive mean-velocity proxy can remain accurate even when transport dynamics are uncertain. The numerical experiments illustrate the different error mechanisms and explore the proxy strategy for a nonlinear nonlocal transport model beyond the explicit linear setting.

Several extensions arise naturally. One direction is to consider time-dependent stochastic transport velocities, for which travel times become path dependent and the explicit characterization developed here is no longer available. A second direction concerns transport networks, where uncertain travel times interact with routing and junction conditions. Further questions include extensions to nonlinear and nonlocal conservation laws and the treatment of correlations between demand and transport uncertainty.

%\begin{quote}
%\textcolor{blue}{%
%\textbf{Declaration of generative AI and AI-assisted technologies in the manuscript preparation process.}
%During the preparation and revision of this manuscript, the authors used ChatGPT (OpenAI) to assist with language editing, manuscript organization, literature exploration, and the discussion and refinement of mathematical arguments. All mathematical statements, proofs, references, and numerical results were independently reviewed and verified by the authors. The authors take full responsibility for the content of the manuscript.}
%\end{quote}

\section*{Declaration of generative AI and AI-assisted technologies}
During the preparation and revision of this manuscript, the authors used ChatGPT, GPT-5.6 Sol (OpenAI), for language editing, improvements in clarity and organization, and exploratory discussion during manuscript development. All mathematical arguments, results, proofs, references, and numerical results were independently verified by the authors. The authors take full responsibility for the content of the manuscript.

\section*{Data Availability Statement}
%The code used for the numerical experiments is available from the authors upon reasonable request.
%
The research codes associated with this article are available in \url{https://github.com/Tommek28/Optimal-Inflow-Control-for-Uncertain-Velocities}, under the reference \cite{Schillinger2026Code}.

\printbibliography

\end{document}